\newcommand{\msf}[1]{\mathsf {#1}}
\newcommand{\mcal}[1]{{\mathcal {#1}}} 
\newtheorem{theorem}{Theorem}   [section]
\newtheorem{lemma}[theorem]{Lemma}
\newtheorem{corollary}[theorem]{Corollary}
\newtheorem{proposition}[theorem]{Proposition}
\theoremstyle{definition}
\theoremstyle{definition}\newtheorem{definition}[theorem]{Definition}
\theoremstyle{definition}
\theoremstyle{definition}
\theoremstyle{definition}
\theoremstyle{definition}
\theoremstyle{plain}\newtheorem*{theorem*}{Theorem}
\theoremstyle{plain}\newtheorem*{corollary*}{Corollary}
\theoremstyle{remark}\newtheorem*{remark*}{Remark}
\theoremstyle{remark}\newtheorem*{remarks*}{Remarks}
\theoremstyle{definition}\newtheorem*{conjecture*}{Conjecture}
\theoremstyle{definition}\newtheorem*{definition*}{Definition}
\theoremstyle{definition}\newtheorem*{example*}{Example}
\theoremstyle{definition}\newtheorem*{question*}{Question}
\theoremstyle{definition}\newtheorem*{questions*}{Questions}
\theoremstyle{definition}\newtheorem*{hypothesis*}{Hypothesis}
\def\qed{\ifhmode\unskip\nobreak\fi\ifmmode\ifinner\else\hskip5pt\fi\fi
 \hfill\hbox{\hskip5pt\vrule width4pt height6pt depth1.5pt\hskip1pt}}
\newcommand{\ov}[1]{\ensuremath{\overline{#1}}}
\newcommand{\Int}{\ensuremath{\operatorname{\mathsf{Int}}}}
\newcommand{\RR}{\ensuremath{{\mathbb R}}}     
\newcommand{\R}[1]{\ensuremath{{\mathbb R}^{#1}}} 
\newcommand{\FF}{\ensuremath{{\mathbb F}}}     
\newcommand{\F}[1]{\ensuremath{{\mathbb F}^{#1}}} 
\newcommand{\C}[1]{\ensuremath{{\mathbb C}^{#1}}} 
\newcommand{\CC}{\ensuremath{{\mathbb C}}}     
\newcommand{\ZZ}{\ensuremath{{\mathbb Z}}}	   
\renewcommand{\S}[1]{\ensuremath{{\bf S}^{#1}}} 
\newcommand{\om}[1]{\ensuremath{\omega(#1)}}
\newcommand{\emp}{\ensuremath{\varnothing}}
\newcommand {\pd}[1] {\ensuremath{\frac{\partial}{\partial #1}}}
\newcommand{\ode}[2]{\ensuremath{\frac{d#1}{d#2}}}    	
\def\d#1dt{\frac{d#1}{dt}}    
\newcommand{\eps}{\ensuremath{\epsilon}}
\newcommand{\lam}{\ensuremath{\lambda}}
\newcommand{\Lam}{\ensuremath{\Lambda}}
\newcommand{\Sig}{\ensuremath{\Sigma}}
\newcommand{\Fix}[1]{\ensuremath{\operatorname{\mathsf {Fix}}(#1)}}
\newcommand{\co}{\colon\thinspace} 
\def\empty{\varnothing}
\newcommand{\fr}[1]{\ensuremath{\operatorname{\mathsf{Fr}}(#1)}}
\renewcommand{\ss}{\ensuremath{\mathfrak {s}}}
\newcommand{\sm}[1]{\ensuremath{\setminus #1}}
\newcommand{\p}{\ensuremath{\partial}}
\def\empty{\varnothing}
\def\emp{\varnothing}
	\def\mylabel#1{\label{#1}}
\newcommand{\V}{\ensuremath{{\mcal V}}}
\newcommand{\Z}[1]{\ensuremath{{\msf  Z ( #1)}}}
\renewcommand{\ss}  {\ensuremath{{\mathfrak {s}}}}
\renewcommand{\dim} {\ensuremath{{\mathsf {dim}}}}
\renewcommand{\om}{\omega}
\def\emp{\varnothing}
\begin{document}


\title{\bf Zero sets of Lie algebras of analytic vector fields on real
  and complex 2-dimensional manifolds} \author{{\bf Morris
    W. Hirsch}\thanks{I thank Paul Chernoff, David Eisenbud, Robert
    Gompf, Henry King, Robion Kirby, Frank Raymond, Helena Reis and
    Joel Robbin for helpful discussions.}  \\ Mathematics
  Department\\ University of Wisconsin at Madison\\ University of
  California at Berkeley}
\maketitle

\begin{abstract} 
Let $M$ denote a real or complex analytic manifold with empty
boundary, having dimension $2$ over the ground field $\FF =\RR$ or
$\CC$.  Let $Y, X$ denote analytic vector fields on $M$.  Say that $Y$
{\em tracks} $X$ provided 
$[Y,X] = fX$ with $f\co M\to \FF$
continuous.  Let $K$ be a compact component of the zero set $\Z X$
whose Poincar\'e-Hopf index is nonzero.

\smallskip
\noindent
{\bf Theorem.} If $Y$ tracks $X$ then $\Z Y \cap K\ne\varnothing$. 

\smallskip
\noindent
{\bf Theorem.} Let $\mcal G$ be a Lie algebra of analytic vector
fields that track $X$, with $\mcal G$ finite-dimensional and
supersolvable when $M$ is real.  Then $\bigcap_{Y\in \mcal G}\Z Y$
meets $K$.

\end{abstract}

\tableofcontents
\section{Introduction}   \mylabel{sec:intro}
 Let $M$ denote a manifold with empty boundary $\p M$.  A fundamental
 issue in Dynamical Systems is deciding whether a vector field has a
 zero.
When is $M$ compact with  Euler characteristic $\chi (M)\ne 0$,
positive answer is given by the celebrated {\sc   Poincar\'e-Hopf} Theorem. 

Determining whether two or more vector fields have a common zero is
much more challenging.  For two commuting analytic vector fields on a
possibly noncompact manifold of dimension $\le 4$, Theorem
\ref{th:bonatti}, due to {\sc C. Bonatti}, not only gives conditions
sufficient for common zeros, it locates specific compact sets.  Our
results establish common zeros for other sets of vector fields.  The
basic results are Theorems  \ref{th:MAIN} and   \ref{th:liealg}.
Theorem \ref{th:liegroup} is an application to fixed points of
transformnation groups. 

Denote the zero set of a vector field $X$ on $M$  by $\Z X$. 
 A {\em block} of zeros for $X$, or an {\em $X$-block}, is a compact
 set $K\subset \Z X$ having an open neighborhood $U\subset M$ such
 that $\ov U$ is compact and $ \Z X \cap \ov U=K$.  Such a set $U$ is
 {\em isolating} for $X$ and for $(X, K)$.

 The {\em index} of the $X$-block $K$ is the integer $\msf i_K
 (X):=\msf i (X,U)$ defined as the Poincar\'e-Hopf index of any
 sufficiently close approximation to $X$ having only finitely many
 zeros in $U$ (Definitions \ref{th:defph}, \ref{th:defindex}).\footnote{
Equivalently: $\msf i (X, U)$ is the
intersection number of $X|U$ with the zero section of the tangent
bundle ({\sc Bonatti} \cite{Bonatti92}).}
This number is independent of $U$, and is stable under perturbations
of $X$ (Theorems \ref{th:stability}, \ref{th:approx}).
When $X$ is $C^1$ and generates the local flow $\phi$, for
sufficiently small $t >0$ the index $\msf i (X, U)$ equals the
fixed-point index $I (\phi_t|U)$ defined by {\sc Dold} \cite {Dold72}.

An $X$-block $K$ is {\em essential} if $\msf i_K (X)\ne 0$.   This
implies  $\Z X\cap K\ne\varnothing$ because every isolating neighborhood
of $K$ meets $\Z X$.

 \begin{theorem} [{\sc Poincar\'e-Hopf}]        \mylabel{th:PH}
If $M$ is compact $\msf i (X, M) =\chi (M)$ for all continuous vector
fields $X$ on $M$.
 \end{theorem}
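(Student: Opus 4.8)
The plan is to reduce the general continuous case to the case of a smooth vector field with nondegenerate zeros, where the classical argument applies, and to do this via the stability/approximation machinery already cited in the excerpt. First I would invoke Theorems~\ref{th:stability} and~\ref{th:approx}: given a continuous vector field $X$ on the compact manifold $M$, there is a smooth vector field $X'$, arbitrarily $C^0$-close to $X$, with only finitely many zeros, all of them nondegenerate, and $\msf i(X,M)=\msf i(X',M)$ since $M$ itself is an isolating neighborhood and the index is stable under close perturbation. So it suffices to prove $\msf i(X',M)=\chi(M)$ for such an $X'$; by definition $\msf i(X',M)=\sum_{p\in\Z{X'}}\Ind_p(X')$ is then the ordinary sum of Poincaré–Hopf indices at the zeros.

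Next I would establish the key invariance: the integer $\sum_p \Ind_p(X')$ does not depend on the choice of $X'$. The cleanest route is the observation (already flagged in the footnote, attributed to Bonatti) that $\msf i(X',M)$ is the intersection number of the section $X'\co M\to TM$ with the zero section of $TM$; since any two sections of a vector bundle over a compact manifold are homotopic through sections, and intersection number is a homotopy invariant, this number depends only on the bundle $TM$, i.e.\ only on $M$. Call this common value $e(M)$. Alternatively, if one prefers not to lean on the bundle-theoretic formulation, one can prove directly that $\sum_p\Ind_p$ is invariant under homotopy of vector fields with isolated zeros, using the fact that a generic homotopy sweeps out finitely many births/deaths of zero pairs of opposite index and slides of zeros, none of which changes the total.

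Finally I would identify $e(M)$ with $\chi(M)$ by exhibiting one convenient vector field. The standard choice is the gradient (with respect to any Riemannian metric) of a Morse function $h\co M\to\RR$: its zeros are the critical points of $h$, each nondegenerate, and the index at a critical point of Morse index $k$ is $(-1)^k$. Hence $e(M)=\sum_k (-1)^k c_k$ where $c_k$ is the number of critical points of index $k$; by Morse theory $\sum_k(-1)^k c_k=\sum_k(-1)^k b_k=\chi(M)$. (A Morse function on a compact $M$ exists, e.g.\ as a generic height function for an embedding $M\hookrightarrow\RR^N$.) This completes the chain $\msf i(X,M)=\msf i(X',M)=e(M)=\chi(M)$.

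The main obstacle is not any single deep step but making sure the reduction in the first paragraph is legitimate in exactly the generality claimed: $X$ is merely continuous, so "approximate by a smooth vector field with nondegenerate zeros" must be justified, and the equality $\msf i(X,M)=\msf i(X',M)$ must be exactly the stability statement of Theorems~\ref{th:stability} and~\ref{th:approx}. Once that is in hand, the remaining content is classical Poincaré–Hopf, which the paper is presumably citing rather than reproving in detail; so in the write-up I would keep paragraphs two and three brief, essentially pointing to the bundle-intersection formulation plus Morse theory, and spend whatever care is needed on the continuous-to-smooth reduction.
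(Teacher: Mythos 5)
The paper does not actually prove Theorem~\ref{th:PH}; it is cited as the ``celebrated'' classical Poincar\'e--Hopf theorem, with the reader referred to the literature (Morse, Pugh, Gottlieb, Jubin) for related index computations. So there is no ``paper's own proof'' to compare against.

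That said, your sketch is a sound outline of one standard proof, and it is consistent with the machinery the paper does set up. Your reduction from continuous $X$ to a smooth perturbation $X'$ with finitely many nondegenerate zeros via Theorems~\ref{th:stability} and~\ref{th:approx} is legitimate: for $M$ compact without boundary, $U=M$ is isolating (with $K=\Z X$), and those theorems do not themselves presuppose Poincar\'e--Hopf, so no circularity arises. The identification of the total index with the Euler number via the intersection of $X'$ with the zero section of $\tau(M)$ is exactly the reformulation the paper records in a footnote (attributed to Bonatti), and the final identification with $\chi(M)$ via a Morse gradient field is the classical closing step. The only thing I would tighten in a write-up is the claim that a continuous $X$ can be approximated by a smooth field all of whose zeros are nondegenerate: you need both a smoothing step (mollification/partition of unity) and a transversality step (Sard or parametric transversality to make $X'$ transverse to the zero section). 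The paper already packages the relevant genericity in Proposition~\ref{th:convex}(iii) (finitely many zeros) rather than insisting on nondegeneracy, and for the index argument finitely many zeros suffices, so you could drop ``nondegenerate'' and lean directly on the paper's own Definition~\ref{th:defindex}. In short: a correct sketch, different from the paper only in that the paper gives no proof at all.
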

 For calculations of the index in more general settings see {\sc Morse
 \cite{Morse29}, Pugh \cite{Pugh68}, Gottlieb \cite{Gottlieb86}, Jubin
 \cite{Jubin09}}.

This paper was inspired by a remarkable result of C. Bonatti, which
does not require compactness of $M$:\footnote{
 ``The demonstration of this result involves a beautiful and quite
  difficult local study of the set of zeros of $X$, as an analytic
  $Y$-invariant set.''\ ---{\sc P.\ Molino} \cite{Molino93}}

\begin{theorem} [{\sc  Bonatti} \cite{Bonatti92}]  \mylabel{th:bonatti}
Assume $M$ is a real manifold of dimension $\le 4$ and $X, Y$ are
analytic vector fields on $M$ such that $[X, Y]= 0$.
Then $\Z Y$ meets every essential $X$-block.\footnote{
In \cite{Bonatti92} this is stated for $\dim (M) = 3$ or $4$.  If
$\dim (M) =2$ the same conclusion is obtained by applying the
3-dimensional case to the vector fields $X \times t\pd t, \ Y\times
t\pd {t}$\, on $M\times\RR$.}
\end{theorem}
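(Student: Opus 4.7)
The strategy is proof by contradiction: suppose $\Z Y \cap K = \emp$, and deduce that $\msf i_K(X) = 0$, contradicting essentiality of $K$. Since $K$ is compact and $Y$ is continuous, we may shrink an isolating open neighborhood $U$ of $K$ so that $\ov U$ is compact and $Y$ is nowhere zero on $\ov U$. The fundamental observation is that $[X,Y]=0$ implies $(\psi^Y_t)_* X = X$, where $\psi^Y_t$ denotes the local flow of $Y$; hence the $Y$-flow preserves $X$ and permutes $\Z X$, and by isolation maps $K$ to itself for $|t|$ small. In particular, $\Z X \cap U$ is locally saturated by $Y$-orbits.

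Next I would straighten $Y$ locally: near any point of $\ov U$ there exist analytic coordinates $(x_1,\ldots,x_{n-1},s)$ with $Y = \pd{s}$, and the identity $[\pd{s}, X] = 0$ forces $X$ to be $s$-independent in these coordinates. Consequently the germ of $\Z X$ at any such point is a product $V \times J$, where $V$ is the common zero set of the ($s$-independent) coefficient functions of $X$ and $J$ is an open interval in the $s$-coordinate. Thus $\Z X$ near $K$ is an analytic subset of $M$ fibered by $Y$-orbits, and the analytic dimension of $V$ at each point is at most $\dim M - 1 \le 3$.

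The crux is then to exploit this local product structure, together with the analytic nature of $\Z X$ as a $Y$-invariant set and the dimension hypothesis $\dim M \leq 4$, to produce an analytic perturbation $\w X$ of $X$ that vanishes nowhere on $\ov U$. The naive candidate $\w X := X + \eps Y$ does not work directly, since $X$ and $Y$ may be linearly dependent at points of $\ov U \setminus K$. Instead one stratifies $\Z X$ near $K$ by analytic dimension (using \L ojasiewicz-type stratifications), uses the $Y$-flow to trivialize tubular neighborhoods of each stratum, and on each stratum constructs a local perturbation which is then glued via an analytic partition-of-unity argument. Its existence forces $\msf i(X,U) = 0$ by the stability of the index (Theorem \ref{th:approx}), contradicting $\msf i_K(X)\ne 0$.

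The main obstacle, and what makes the result genuinely hard, is the analytic-geometric stratification step. Without analyticity the local stratification of $\Z X$ can be pathological, and without $\dim M \le 4$ the number of possible local-dimensional configurations of $\Z X$ explodes and the gluing obstruction ceases to be controllable---indeed the theorem is known to fail in higher dimensions. In our setting one must handle separately the cases where $\Z X$ has local analytic dimension $0,1,2,3$ inside $M$, verifying in each case that the local contribution to the Poincar\'e-Hopf index vanishes once the $Y$-saturation is taken into account. This is precisely the ``beautiful and quite difficult local study'' to which Molino's remark refers.
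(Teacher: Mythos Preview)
The paper does not contain a proof of this theorem: it is quoted as Bonatti's result from \cite{Bonatti92}, with only the footnote explaining how the 2-dimensional case follows from the 3-dimensional one by passing to $M\times\RR$. So there is no ``paper's own proof'' to compare against beyond that reduction.

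Your proposal is an outline rather than a proof, and you say as much in the final paragraph. The opening moves are sound: the contradiction setup, the observation that $[X,Y]=0$ makes $\Z X$ locally $Y$-saturated, and the flow-box reduction $Y=\pd s$ with $X$ independent of $s$ are all correct and are indeed the starting point of Bonatti's argument. But the third paragraph contains a genuine error and then defers the actual content. The error is the phrase ``glued via an analytic partition-of-unity argument'': there are no analytic partitions of unity, since a real- or complex-analytic function with compact support is identically zero. Whatever gluing Bonatti performs must be done by other means (and in fact his argument does not proceed by constructing a global nonsingular analytic approximation to $X$). Moreover, even setting aside analyticity, your stated goal---produce $\w X$ nonvanishing on $\ov U$---is equivalent by Theorem~\ref{th:approx} to the conclusion $\msf i(X,U)=0$ you are trying to reach, so the paragraph is circular unless you actually carry out the stratum-by-stratum index computation you allude to.

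In short: the skeleton is reasonable, but what you have written is a description of where the difficulty lies, not a proof. The substantive work---the case analysis on the local dimension of $\Z X$ and the index calculation in each case---is precisely what Bonatti's paper does and what your proposal omits.
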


\subsection{Statement of results}   \mylabel{sec:results}
Theorems \ref{th:MAIN} and  \ref{th:liealg} extend Bonatti's Theorem
to certain pairs of noncommuting vector fields, and Lie algebras of
vector fields.  

$M$ always denotes a real or complex manifold without boundary, over
the corresponding ground field $\FF=\RR$ (the real numbers) or $\CC$
(the complex numbers).
In the main results $M$ has dimension $\dim_\FF M=2$ over $\FF$.

 $\V (M)$ is the vector space over $\FF$ of continuous vector
fields on $M$, with the compact-open topology.  The subspace of $C^r$
vector fields is $\ V^r (M)$, where $r$ is a positive integer,
$\infty$ (meaning $C^k$ for all finite $k$), or $\om$ (analytic over
$\FF$).  When $M$ is complex, $\mcal V^r (M)$ and $\mcal V^\om (M)$
are identical as real vector spaces, and  $\mcal V^\om (M)$ is a
complex Lie algebra.  A linear subspace of $\mcal V^r
(M)$ is called a Lie algebra if it is closed under Lie brackets.  The
set of common zeros of a set $\ss\subset \mcal V (M)$ is $\Z\ss
:=\bigcap_{X\in\ss}\Z X$.

$X$ and $Y$ denote vector fields on $M$. 

$Y$ {\em tracks $X$} provided $Y, X \in \V^1 (M)$ and $[Y, X]=fX$
with $f\co M\to \FF$ continuous.  When $\FF=\RR$ this implies the local flow
generated by $Y$ locally permutes orbits of $\Phi^X$ (Proposition
\ref{th:orbits}).  We say that a set of vector fields tracks $X$ when
each of its elements tracks $X$.

\begin{example*}[\bf A]    
Suppose $\mcal G$ is a Lie algebra of $C^1$ vector fields on $M$.  If
$X\in \mcal G$ spans an ideal then $ \mcal G$ tracks $X$, and the
converse holds provided $\mcal G$ is finite dimensional.
\end{example*}

Here are  the main results. The manifold  $M$ is real or complex with
$\dim_\FF M=2$. 
\begin{theorem}         \mylabel{th:MAIN}
Assume $X, Y\in \V^\om (M)$,  $Y$ tracks $X$, and $K\subset \Z X$ is an
essential $X$-block.  Then $\Z Y\cap  K\ne\varnothing$. 
\end{theorem}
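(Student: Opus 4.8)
The plan is to reduce the statement to Bonatti's Theorem \ref{th:bonatti} by analyzing the structure of $\Z X$ near the essential block $K$ and using the tracking hypothesis $[Y,X]=fX$ to show that $Y$ must vanish somewhere on $K$. First I would work locally and exploit analyticity: near any point of $K$, the zero set $\Z X$ is a real (or complex) analytic variety, so it has a locally finite stratification, and in dimension $2$ the possibilities for the germ of $\Z X$ at a point are quite restricted — an isolated point, a $1$-dimensional analytic arc/curve, or (in the real case) a $2$-dimensional piece on which $X\equiv 0$ on an open set. Because $K$ is an essential $X$-block, $K$ is compact, isolated in $\Z X$, and carries nonzero index; in particular $K$ cannot be swallowed by a larger zero set, and the index being nonzero prevents $K$ from being, say, a single two-sided curve with cancelling contributions.

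The key dynamical input is that $Y$ tracking $X$ forces $\Z X$ to be "almost" invariant under the local flow $\Phi^Y$ of $Y$: from $[Y,X]=fX$ one gets, along orbits of $\Phi^Y$, that $X$ is carried to a scalar multiple of itself (this is the content referenced in Proposition \ref{th:orbits}), so $\Phi^Y_t$ permutes the orbits of $X$ and in particular maps $\Z X$ into $\Z X$. Since $K$ is a union of connected components of $\Z X$ that is compact and isolated, and $\Phi^Y$ depends continuously on $t$ with $\Phi^Y_0=\mathrm{id}$, the flow $\Phi^Y_t$ must preserve $K$ itself for small $t$. Thus $Y$ is tangent to (the smooth strata of) $K$, i.e. $K$ is $Y$-invariant. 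Now I would argue that an essential $X$-block that is invariant under $Y$ must meet $\Z Y$: if $Y$ were nonvanishing on all of $K$, then on the top-dimensional stratum $K$ would be a compact manifold (a circle or a surface, possibly with the lower strata attached) admitting a nonvanishing vector field and, crucially, the index computation of $X$ localized near $K$ would have to vanish by a Poincaré–Hopf type argument on the $Y$-orbit structure — contradicting essentiality. In the real $3$- or $4$-dimensional ambient setting Bonatti established exactly this kind of statement for commuting fields; here the extra scalar factor $f$ does not affect the orbit foliation, so the same local analysis of $\Z X$ as a $Y$-invariant analytic set applies.

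Concretely, the steps in order would be: (1) Reduce to a neighborhood $U$ isolating $(X,K)$ and recall $\msf i_K(X)=\msf i(X,U)\neq 0$. (2) Use $[Y,X]=fX$ to show $\Phi^Y_t$ maps $X$-orbits to $X$-orbits, hence maps $\Z X\cap U$ into $\Z X$, hence (by compactness and isolation of $K$, plus continuity in $t$) preserves $K$ for all small $t$; conclude $Y$ is tangent to $K$. (3) Stratify $\Z X$ near $K$ analytically; on each stratum $Y$ restricts to a vector field, and the global index $\msf i_K(X)$ is expressed as a sum of local contributions along the $Y$-flow. (4) Assume for contradiction $\Z Y\cap K=\varnothing$; then $Y|K$ is nonvanishing, the $Y$-flow has no fixed points in $K$, and one pushes the zero set of a generic perturbation of $X$ off itself along $Y$-orbits, forcing $\msf i(X,U)=0$. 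This contradiction proves $\Z Y\cap K\neq\varnothing$. The main obstacle I anticipate is step (3)–(4): controlling the index contribution of $X$ along the (possibly singular, non-reduced, or higher-dimensional in the real case) strata of $\Z X$ while only knowing $Y$ is tangent to them — this is precisely the "beautiful and quite difficult local study" Molino refers to, and adapting it from the commuting case to the tracking case (where the conjugating factor $e^{\int f}$ rescales $X$ but not the orbit geometry) is where the real work lies. A secondary subtlety is the real versus complex dichotomy: over $\CC$ the variety $\Z X$ is a complex curve or points and the argument is cleaner, whereas over $\RR$ one must handle the genuinely $2$-dimensional zero-set components, where essentiality of the block is what rules out trivial index.
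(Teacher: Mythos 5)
Your framework is right up to a point: reducing to a connected isolating neighborhood $U$, invoking $Y$-invariance of $\Z X$ and hence of $K$ (the paper's Proposition \ref{th:ideal}), and aiming for an index-theoretic contradiction with $\msf i(X,U)\neq 0$. But the step that is supposed to carry the whole weight --- your step (4), ``push the zero set of a generic perturbation of $X$ off itself along $Y$-orbits'' --- does not work as described, and this is precisely where the paper does something genuinely different.

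The trouble is that pushing along $Y$-orbits cannot remove zeros of $X$: since $Y$ tracks $X$, the set $\Z X$ is $\Phi^Y_t$-invariant, so $(\Phi^Y_t)_*X$ has exactly the same zero set as $X$. Nor does adding a small multiple of $Y$ to $X$ help in general, because the degenerate case $X\wedge_\FF Y\equiv 0$ (everywhere parallel fields) is exactly the hard case and there such a perturbation changes nothing near $K$. The paper does not reduce to Bonatti's Theorem \ref{th:bonatti} at all; instead it proves directly that $\msf i(Y,U)\neq 0$, which by the Stability Theorem forces $\Z Y\cap U\neq\varnothing$ and hence $\Z Y\cap K\neq\varnothing$. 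The actual case analysis is organized around the dependency set $\msf{Dep}_\FF(X,Y)$: if $K$ is a component of $\msf D$ one can shrink $U$ so $X,Y$ are independent on $\fr U$ and get $\msf i(Y,U)=\msf i(X,U)$ by a linear homotopy (Proposition \ref{th:lindep}); if $\dim_\FF\msf D=1$ but $K$ is not a component, the frontier in $K$ of $K\cap\overline{\msf D\setminus K}$ is a nonempty finite $Y$-invariant set, giving zeros of $Y$ directly; and if $X\wedge_\FF Y\equiv 0$, the real case follows from Proposition \ref{th:wedge}, but the complex case requires the substantive extra argument --- $K$ is a genus-one Riemann surface, $\tau_K(M)$ is holomorphically trivial via the divisor line bundle $[K]$ (Proposition \ref{th:div}), and the holomorphic relation $X=fY$ near $K$ feeds into Proposition \ref{th:fue}. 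Your outline does not address the complex case separately and does not engage with the dependency set structure, so the heart of the proof is missing.
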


A Lie algebra $\mcal G$ is {\em supersolvable} if it
is real and faithfully represented by  upper triangular
matrices.  If $\mcal G$ is the Lie algebra of Lie group $G$ we call $G$ 
supersolvable.

\begin{theorem}               \mylabel{th:liealg}
Assume $X\in\V^\om (M)$, $K$ is an essential $X$-bloc, and 
 $\mcal G\subset \V^\om (M)$ is a Lie algebra  that tracks $X$. 
Let
one  of the following conditions hold:
\begin{description}

\item[(a)] $M$ is complex,

\item[(b)] $M$ is real and $\mcal G$ is supersolvable. 
\end{description}
Then  $\Z{\mcal G}\cap K\ne\varnothing$.
\end{theorem}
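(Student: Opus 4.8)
The plan is to deduce Theorem~\ref{th:liealg} from Theorem~\ref{th:MAIN} by an induction on $\dim \mcal G$, treating the zero set of a single tracking field as an invariant subvariety and then peeling off ideals. The base case $\dim\mcal G \le 1$ is immediate: either $\mcal G = 0$ and $\Z{\mcal G}=M \supset K$, or $\mcal G$ is spanned by a single $Y$ tracking $X$, and Theorem~\ref{th:MAIN} gives $\Z Y \cap K \ne\varnothing$. The inductive step needs two ingredients: (i) a structural fact that a nonzero finite-dimensional supersolvable Lie algebra (or, in the complex case, any nonzero finite-dimensional complex Lie algebra) contains a codimension-one ideal $\mcal H$; and (ii) a way to promote ``$K$ is an essential $X$-block'' to ``some component of $\Z Y \cap K$ is an essential $X'$-block for a suitable $X'$ tracking the whole algebra,'' so that the induction has something to grip.

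For ingredient (i): in the complex case, Lie's theorem applies to the complex Lie algebra $\mcal G$ acting on itself by the adjoint representation if $\mcal G$ is solvable; but $\mcal G$ need not be solvable, so instead one passes to a Borel-type argument, or more simply uses that any finite-dimensional Lie algebra over $\CC$ has a codimension-one subalgebra containing $[\mcal G,\mcal G]$ when $\mcal G$ is solvable, and reduces the general complex case to the solvable case by first replacing $\mcal G$ by a maximal solvable (hence by Lie's theorem triangularizable) subalgebra — wait, that loses zeros. The cleaner route, and the one I would actually take, is: when $M$ is complex, Theorem~\ref{th:MAIN} already handles each $Y$ individually, and the common zero is extracted by a descending-chain / compactness argument on the analytic sets $\Z{Y_1}\cap\cdots\cap \Z{Y_k}\cap K$; the key point is that these need not be essential blocks but they are nonempty compact analytic sets, and one shows the intersection stays nonempty by exploiting that each $Y_{k+1}$ tracks $X$ and hence (by an analyticity/index argument of Bonatti type) cannot miss a whole component. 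For the supersolvable real case, ingredient (i) is genuinely available: a supersolvable $\mcal G$ has, by definition, a faithful upper-triangular representation, so it has a chain of ideals $0 = \mcal G_0 \subset \mcal G_1 \subset \cdots \subset \mcal G_n = \mcal G$ with $\dim \mcal G_i = i$; take $\mcal H = \mcal G_{n-1}$, a codimension-one ideal.

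The heart of the matter — and the step I expect to be the main obstacle — is the inductive mechanism: given the codimension-one ideal $\mcal H \triangleleft \mcal G$ and a generator $Y$ of $\mcal G/\mcal H$, I want to say that $\Z{\mcal H}\cap K$ contains an essential $X'$-block that is also invariant in the appropriate sense, then apply Theorem~\ref{th:MAIN} (or its proof) to the pair $(X', Y)$ on that block. There are two difficulties packed in here. First, $\Z{\mcal H}\cap K$ is a compact analytic set but it need not be a zero set of a single vector field, so ``essential block'' has to be replaced by the right notion — presumably by choosing $X'\in\mcal H$ generically, or a suitable combination, whose zero set near $K$ captures $\Z{\mcal H}\cap K$ and inherits a nonzero index from $K$; verifying that such an $X'$ exists and that its index on the relevant component is nonzero is where the real $2$-dimensional analytic geometry (local structure of $\Z X$ as a $Y$-invariant analytic set, in Molino's phrase) must be invoked, essentially re-running or citing the local analysis underlying Theorem~\ref{th:MAIN}. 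Second, one must check that $Y$ (or its class mod $\mcal H$) still tracks $X'$ on the smaller set — this follows from the ideal property $[Y,\mcal H]\subset\mcal H$ together with the tracking relation $[Y,X]=fX$, but the bookkeeping of how $f$ and the structure constants interact requires care. Once these are in place, the induction closes: $\Z{\mcal G}\cap K = \Z Y \cap (\Z{\mcal H}\cap K) \supset \Z Y \cap (\text{essential } X'\text{-block}) \ne \varnothing$ by Theorem~\ref{th:MAIN}. I would present the supersolvable real case in full via this ideal-chain induction, and handle the complex case either by the same induction (using that a codimension-one ideal exists after replacing $\mcal G$ by its — already triangularizable — image, since over $\CC$ one may invoke Lie's theorem on the solvable radical and the reductive part acts trivially on the relevant index, or more honestly by the descending-chain compactness argument sketched above, which is robust but requires justifying that each successive intersection with $K$ stays essential).
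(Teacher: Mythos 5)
Your proposal correctly identifies that Theorem~\ref{th:liealg} should follow from Theorem~\ref{th:MAIN} plus structure theory of the Lie algebra, and the base cases are right, but the inductive mechanism you sketch is not the one the paper uses, and in both cases the step you flag as ``the main obstacle'' really is a gap that the paper sidesteps rather than solves.

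For the real supersolvable case, you want to find, inside $\Z{\mcal H}\cap K$, an essential $X'$-block for some $X'$ tracking the algebra, and then reapply Theorem~\ref{th:MAIN} to the pair $(X',Y)$. That plan requires manufacturing a new essential block on the smaller set, which is exactly the hard part you admit you cannot fill in. The paper never does this. Instead, after the same reduction to $K$ connected, $1$-dimensional, a compact analytic curve with $\chi(K)=0$, it runs the ideal-chain induction and then exploits nothing but analyticity, $\mcal G$-invariance, and the fact that $K$ is $1$-dimensional: the inductive hypothesis gives a nonempty compact $L:=\Z{\mcal H_{d-1}}\cap K$, and $L$ is $\mcal G$-invariant (being an intersection of zero sets of ideals). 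If $\dim L=0$ then $L$ is a nonempty finite $\mcal G$-invariant set, hence contained in $\Z{\mcal G}\cap K$. If $\dim L=1$, either $L=K$ and we are done, or the frontier of $L$ in $K$ is a nonempty finite $\mcal G$-invariant set, again contained in $\Z{\mcal G}\cap K$. No new essential block is ever constructed; the index enters only once, through the base case and Theorem~\ref{th:MAIN}, and the induction is pushed through by low-dimensional analytic-set arguments of exactly the kind used to justify (A1)--(A3) in the proof of Theorem~\ref{th:MAIN}.

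For the complex case your sketch is not salvageable as written: a general finite-dimensional complex Lie algebra has no codimension-one ideal (any simple one fails), and the ``descending-chain / compactness'' alternative you gesture at needs precisely the claim that each successive intersection with $K$ remains essential, which you neither prove nor even precisely state. The paper avoids all of this with a short argument special to complex dimension one. After the same reductions, $K$ is a compact connected Riemann surface of genus~$1$ holomorphically embedded in $M$, and every $Y\in\mcal G$ leaves $K$ invariant, so $Y|K$ is a holomorphic vector field on $K$. Theorem~\ref{th:MAIN} forces $\Z Y\cap K\ne\varnothing$; but a nontrivial holomorphic vector field on a genus-one compact Riemann surface has no zeros, so $Y|K\equiv 0$, i.e.\ $K\subset\Z Y$ for every $Y\in\mcal G$. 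Thus $K\subset\Z{\mcal G}$ outright, with no induction, no ideals, and no supersolvability hypothesis. This is why condition (a) in the theorem imposes nothing on $\mcal G$ beyond being a Lie algebra that tracks $X$. You should rework the complex case along these lines and replace your essential-block inductive step in the real case with the frontier/finiteness argument.
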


\begin{example*}[\bf B]        
Let $M, \mcal G, X$ be as in Theorem \ref{th:liealg} and assume the
local flow $\Phi^X$ has a compact global attractor.  It can be shown
that $M$ is an isolating neighborhood for the $X$-block $\Z X$, $M$
has finitely generated homology, and $\msf i (X, M)=\chi (M)$.
 Theorem \ref{th:liealg} thus implies:
 
\begin{itemize}   
\item {\em If $\chi (M)\ne 0$ then $\Z{\mcal G}\ne\varnothing$.}
\end{itemize}
For instance:
\begin{itemize}   
\item
{\em Let $\mcal G$ be a Lie algebra of holomorphic vector fields
  on $\C 2$.  If $X\in \mcal G$ spans an ideal and $\Phi^X$ has a global attractor,
  then $\Z{\mcal G}\ne\varnothing$.
}
\end{itemize}
\end{example*}

Now let $G$ denote  a connected
Lie group over the same ground field as 
$M$.  
An  {\em analytic action} of $G$ on $M$ is a  homomorphism 
$\alpha\co g\mapsto g^\alpha$ from $G$ to the group of analytic 
diffeomorphisms of $M$; this action is also denoted by $(\alpha, G, M)$.
The action
is {\em effective} if its kernel 
is trivial.  Its {\em fixed point set} is 
\[\Fix \alpha:=\{p\in M\co g^\alpha (p)=p, \quad (g\in G)\}.\]

\begin{theorem}         \mylabel{th:liegroup} 
Assume:
\begin{itemize}
\item $M$ is a compact complex 2-manifold and 
  $\chi (M)\ne 0$,

\item $(\alpha, G, M)$ is  an  effective analytic action,

\item $G$ contains a  1-dimensional normal subgroup.
\end{itemize}
Then $\Fix \alpha \ne\varnothing$.

\end{theorem}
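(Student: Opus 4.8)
The plan is to reduce the fixed-point statement for the group action to the common-zero statement of Theorem \ref{th:liealg}. First I would pass from the action $(\alpha, G, M)$ to the induced Lie algebra homomorphism $\mcal G \to \V^\om(M)$, where $\mcal G = \operatorname{Lie}(G)$: each one-parameter subgroup of $G$ gives an analytic vector field on $M$ whose local flow is the corresponding action, and the image $\mfr h \subset \V^\om(M)$ is a finite-dimensional complex Lie algebra of holomorphic vector fields. Since $M$ is compact, these fields are complete, and the common zero set of $\mfr h$ is precisely $\Fix\alpha$ (a point is fixed by the whole connected group $G$ iff it is a zero of every one-parameter subgroup's generator, $G$ being connected). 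So it suffices to produce a holomorphic $X$ on $M$ with an essential $X$-block $K$ such that $\mfr h$ tracks $X$.

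Next I would use the hypothesis that $G$ contains a $1$-dimensional normal subgroup $N$. Its Lie algebra is a $1$-dimensional ideal $\FF X_0 \subset \mcal G$, which maps to a holomorphic vector field $X := $ (image of $X_0$) spanning an ideal of $\mfr h$. By Example A, $\mfr h$ tracks $X$. Now I would invoke the Poincar\'e--Hopf setup: since $M$ is compact, $M$ itself is an isolating neighborhood for the $X$-block $\Z X$, and by Theorem \ref{th:PH} its index is $\msf i(X, M) = \chi(M) \ne 0$. Hence $\Z X$ is an essential $X$-block (or, if it is disconnected, one of its finitely many compact components $K$ carries the nonzero index and is an essential $X$-block — here one needs that an essential block decomposes into blocks whose indices sum to $\chi(M)$, so some component has nonzero index). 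With $X$ holomorphic on the complex $2$-manifold $M$, an essential $X$-block $K$, and the Lie algebra $\mfr h$ of holomorphic vector fields tracking $X$, condition (a) of Theorem \ref{th:liealg} applies and gives $\Z{\mfr h} \cap K \ne \varnothing$, so in particular $\Z{\mfr h} \ne \varnothing$, i.e. $\Fix\alpha \ne\varnothing$.

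The main obstacle I expect is the bookkeeping at the junction between "the action" and "the vector fields": verifying carefully that $\Fix\alpha = \Z{\mfr h}$ (this uses connectedness of $G$ and completeness of the fields from compactness of $M$), and verifying that the $1$-dimensional \emph{normal} subgroup of $G$ yields a vector field $X$ on $M$ that genuinely spans an ideal of the image algebra $\mfr h$ and is not identically zero. If the homomorphism $\mcal G \to \V^\om(M)$ has nontrivial kernel, the image of $X_0$ could vanish; but then that kernel would correspond to a positive-dimensional subgroup acting trivially, contradicting effectiveness of $\alpha$ — so $X \ne 0$, and since $X_0$ spans an ideal of $\mcal G$, $X$ spans an ideal of $\mfr h$. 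The remaining steps — identifying $\chi(M)$ as the index via Theorem \ref{th:PH}, extracting an essential component, and applying Theorem \ref{th:liealg}(a) — are then routine.
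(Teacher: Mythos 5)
Your proposal is correct and follows essentially the same route as the paper: pass from the effective analytic action to the (injective, by effectiveness) Lie algebra homomorphism into $\V^\om(M)$, take $X$ spanning the image of the $1$-dimensional ideal corresponding to the normal subgroup, use Poincar\'e--Hopf to see that $K:=\Z X$ is an essential $X$-block since $\chi(M)\ne 0$, and apply Theorem \ref{th:liealg}(a) together with $\Z{\mcal G}=\Fix\alpha$ (from connectedness of $G$). The only small superfluity is the worry about extracting a connected component of $\Z X$: the paper's notion of $X$-block does not require connectedness, so $\Z X$ itself with isolating neighborhood $M$ already serves directly.
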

\begin{proof}
This follows from Theorem \ref{th:liealg}(a) (see Section
\ref{sec:proofs}). 
\end{proof}

The analogous result for  analytic actions of supersolvable Lie groups on
real surfaces  is due to {\sc Hirsch
  \& Weinstein} \cite{HW2000}.   But  {\sc Lima} \cite {Lima64} and 
{\sc Plante} \cite{Plante86} have shown that every compact surface supports 
    a  continuous fixed-point free 
  action by the 2-dimensional group whose Lie algebra has the
  structure $[Y, X]= X$.  Whether $X$ and $Y$ can be smooth is unknown.

\subsection{Terminology}
The closure of subset $\Lam$ of a topological space $S$ is denoted by
$\ov \Lam$, the frontier by $\fr \Lam:=\ov \Lam \cap \ov{S \sm\Lam}$,
and the interior by $\Int (\Lam)$.

Maps are continuous unless otherwise characterized.  A map is {\em
  null homotopic} if it is homotopic to a constant map.

If is vector $\xi$ in $\R n$, or a tangent vector to a Riemannian
manifold, its norm is $\|\xi\|$.  The unit sphere in $\R n$ is $\S n$.

Let $\dim_\FF (M)=n$.  The tangent bundle $\tau (M)$ is an $\F
n$-bundle, meaning a fibre bundle over $M$ with total space $T(M)$,
projection $\pi_M\co T(M)\to M$, standard fibre $\F n$, and structure
group $GL (n,\FF)$ (see Steenrod \cite{Steenrod51}).  The fibre over
$p\in M$ is tangent space to $M$ at $p$ is $T_p(M):=\pi_M^{-1}(p)$.
When $M$ is an open set in $\F n$ we identify $T_p(M)$ with $\F n$,
$\tau (M)$ with the trivial vector bundle $M\times \F n \to M$, and
$X\in\V (M)$ with the map \ $M\to \F n$, \,$p\mapsto X_p$.

Assume $X\in \V^r (M)$ and $\p M=\emp$. The local flow on $M$ whose
trajectory through $p$ is  the $X$-trajectory of $p$  is denoted by
$\Phi^X:=\big\{\Phi^X_t\big\}_{t\in\RR}$, referred to informaly as the {\em
  $X$-flow}.   The maps
$\Phi^X_t$ are $C^r$ diffeomorphisms between open subsets of $M$.

An {\em $X$-curve}  is the image of an integral
  curve $t\mapsto y(t)$ of $X$.  This is either a singleton and hence
  a zero of $X$, or an
  interval.  The maximal $X$-curve
  through $p$ is the {\em orbit} of $p$ under $X$. 
A set $S\subset M$ is {\em $X$-invariant}  if it contains the
orbits under $X$ of its points.  When this holds for all $X$ in a set
$\mcal H \subset \V^1 (M)$ then $S$ is {\em $\mcal H$-invariant}.

If $X, Y$ are vector fields on $M$, the alternating tensor field
$X\wedge Y\in \Lam^2 (M)$ may be denoted by $X\wedge_\FF Y$ in order
to emphasize the ground field.   $X\wedge_\FF Y=0$ means $X_p$ and
$Y_p$ are linearly dependent over $\FF$ at all $p\in M$.

\section{Consequences of tracking} \mylabel{sec:tracking}
Throughout this section we assume:
\begin{itemize}

\item {\em $M$ is a real or complex manifold,  \,$\dim_\FF (M)=n\ge 1$,
  \  $\p M=\emp$,} 

\item {\em $X, Y \in \V^1 (M)$,}

\end{itemize}
where $\FF$ denotes the ground field $\RR$ or
$\CC$. 

 \begin{definition*}              \mylabel{th:deftracks}
 $Y$ {\em tracks $X$} provided $Y, X \in \V^1 (M)$ and $[Y, X]=fX$
  with $f\co M\to \FF$ continuous.
 \end{definition*}

\begin{proposition}             \mylabel{th:backprop}
Suppose $Z \in \V^1 (M)$.  If $Y$ and $Z$ track $X$ and $[Y, Z]$ is
$C^1$, then $[Y, Z]$ tracks $X$.
\end{proposition}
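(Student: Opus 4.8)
The plan is to compute the Lie bracket $[[Y,Z],X]$ directly using the Jacobi identity together with the tracking hypotheses. Write $[Y,X]=fX$ and $[Z,X]=gX$ with $f,g\colon M\to\FF$ continuous. The Jacobi identity gives
\[
[[Y,Z],X] = [Y,[Z,X]] - [Z,[Y,X]] = [Y,gX] - [Z,fX].
\]
Now expand each term using the derivation property of the Lie bracket over functions: $[Y,gX] = (Yg)X + g[Y,X] = (Yg)X + gfX$, and similarly $[Z,fX] = (Zf)X + f[Z,X] = (Zf)X + fgX$. Subtracting, the $fgX$ terms cancel and we obtain
\[
[[Y,Z],X] = \big((Yg) - (Zf)\big)X = hX, \qquad h := Yg - Zf.
\]
So $[Y,Z]$ tracks $X$ with coefficient function $h = Yg - Zf$, provided $h$ is continuous and the manipulations are legitimate.

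**The main obstacle is regularity**, not algebra. The hypothesis only gives $Y,Z,X\in\V^1(M)$ and $f,g$ continuous; there is no a priori reason the directional derivatives $Yg$ and $Zf$ should exist, since $g$ and $f$ are merely continuous. This is precisely why the proposition carries the extra hypothesis that $[Y,Z]$ is $C^1$: I would argue that once $[Y,Z]$ is known to be a $C^1$ vector field, the identity $[[Y,Z],X] = hX$ forces $h$ to be continuous on the open set where $X\neq 0$ (divide locally by a nonvanishing component of $X$), and then one must check $h$ extends continuously across $\Z X$. For the extension, I would use that near a point of $\Z X$ the vector field $X$ has at least one coordinate function vanishing to finite order is not available in general; instead the cleaner route is to observe that $[[Y,Z],X]$ is a continuous vector field (all three of $[Y,Z]$, $X$ are $C^1$), it vanishes on $\Z X$, and it is pointwise a scalar multiple of $X$; the scalar $h(p)$ is uniquely determined wherever $X_p\neq 0$, and one shows directly from the bracket formula $[[Y,Z],X]=\big((Yg)-(Zf)\big)X$ — valid at least in the distributional or pointwise-where-defined sense — that the scalar field so defined agrees with a continuous function.

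**Alternatively**, and perhaps more in the spirit of the paper, I would avoid differentiating $f,g$ altogether: define $h$ to be the unique continuous function with $[[Y,Z],X]=hX$ on $\{X\neq 0\}$, which exists because $[[Y,Z],X]$ (a continuous vector field, by the $C^1$ hypothesis on $[Y,Z]$) is proportional to $X$ there and $X$ is locally nonvanishing, so $h$ is a ratio of continuous functions; then extend $h$ by $0$ across $\Z X$ — or by whatever the proportionality forces — and verify continuity at zeros of $X$ using that $[[Y,Z],X]_p = 0$ for $p\in\Z X$ together with a local estimate $\|[[Y,Z],X]_q\| \le C\|X_q\|$ coming from the bracket being a first-order operator. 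The only genuinely nontrivial point is establishing this last estimate, i.e. that $[W,X]$ vanishes to the same order as $X$ whenever $W$ is tangent to $\Z X$ in the appropriate sense; but since $[Y,Z]$ is an $\FF$-linear combination (with continuous coefficients, after the computation) of vector fields tracking $X$, it inherits the property of being tangent to $\Z X$, and the estimate follows. I expect the write-up to handle this by reducing to local coordinates and invoking the product/Leibniz structure, so the proof should be short.
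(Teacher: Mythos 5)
Your algebraic computation is exactly what the paper's one-line proof (``Follows from the Jacobi identity'') intends: apply Jacobi to get $[[Y,Z],X]=[Y,[Z,X]]-[Z,[Y,X]]$, then Leibniz to obtain $(Yg-Zf)X$, so the two proofs take the same route. The regularity worry you raise is legitimate and is silently glossed over by the paper as well --- $Yg$ and $Zf$ need not exist when $f,g$ are merely continuous, so the coefficient $h$ should really be produced as the continuous ratio $[[Y,Z],X]/X$ on $\{X\neq 0\}$ (in flowbox coordinates for $X$ one checks directly that $[[Y,Z],X]$ is proportional to $X$ there, using only $\partial_1 Y^i=\partial_1 Z^i=0$ for $i>1$), together with the observation that $\Z X$ is invariant under $Y$, $Z$, hence under $[Y,Z]$, forcing $[[Y,Z],X]$ to vanish on $\Z X$; neither your sketches nor the paper complete the continuity of $h$ across $\Z X$, but in the paper's applications everything is analytic and the point evaporates.
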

\begin{proof} Follows from the Jacobi identity.
\end{proof}

\begin{definition}              \mylabel{th:defdepend}
The {\em dependency set} of $X$ and $Y$ (over the ground field) is 
\[
  \msf{Dep}_\FF (X,Y):=\big\{p\in M\co X_p\wedge_\FF Y_p=0\big\}. 
\]
\end{definition}
\begin{proposition}             \mylabel{th:ideal}
If $Y$ tracks $X$ then $\Z X$ and $\mcal D (X, Y)$ are $X$- and $Y$-invariant.
\end{proposition}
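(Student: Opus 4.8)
The plan is to prove the two invariance statements in tandem, exploiting the fact that tracking controls the Lie bracket $[Y,X]$.  First I would set up notation: write $\Phi = \Phi^Y$ for the local flow of $Y$, and recall the standard identity $\frac{d}{dt}\big(\Phi_{-t}^*X\big) = \Phi_{-t}^*[Y,X]$ for the pullback (Lie derivative) of $X$ along the $Y$-flow.  Since $Y$ tracks $X$ we have $[Y,X] = fX$ with $f$ continuous, so along any $Y$-orbit the pulled-back vector field $X_t := \Phi_{-t}^*X$ satisfies a \emph{linear} (scalar-coefficient) ODE in the single unknown ``direction'' $X$: namely $\frac{d}{dt}X_t = (f\circ\Phi_t)\,X_t$ at each point.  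This is the key observation — it says $X_t$ stays proportional to $X_0$.

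For the invariance of $\Z X$ under $Y$: fix $p \in \Z X$ and let $t \mapsto \Phi_t(p)$ be its $Y$-orbit.  I want to show $X$ vanishes along this orbit.  Using the ODE above at the point $p$, the curve $t \mapsto (\Phi_{-t}^*X)_p \in T_p M$ satisfies $\frac{d}{dt}v(t) = (f(\Phi_t(p)))\,v(t)$ with initial condition $v(0) = X_p = 0$; by uniqueness for linear ODEs $v(t)\equiv 0$, hence $X_{\Phi_t(p)} = 0$ for all $t$ in the domain, i.e. the $Y$-orbit of $p$ lies in $\Z X$.  The same argument run with the roles reversed — noting that $\Z X$ is trivially $X$-invariant since $X$ vanishes there — gives $X$- and $Y$-invariance of $\Z X$.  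Actually $X$-invariance of $\Z X$ is immediate and needs no tracking hypothesis; only the $Y$-invariance uses it.

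For the invariance of $\msf{Dep}_\FF(X,Y) = \{p : X_p\wedge_\FF Y_p = 0\}$: this set is $X$-invariant for the same linear-ODE reason applied to $Y$ pulled back along $\Phi^X$ — one checks that $[X,Y] = -fX$ means the $X$-flow sends $X\wedge Y$ to a scalar multiple of itself (the $X$-direction is preserved up to scale and gets no contribution, while the $Y$-component evolves linearly), so $X_p\wedge Y_p = 0$ is preserved along $X$-orbits.  For $Y$-invariance of $\msf{Dep}_\FF(X,Y)$: along a $Y$-orbit through a point $p$ with $X_p\wedge Y_p = 0$, pull everything back by $\Phi^Y_{-t}$; then $\Phi_{-t}^*X = \mu(t)X$ for a scalar function $\mu$ with $\mu(0)=1$ (from the proportionality established above, now at a point where $X$ need not vanish — one gets $X_t = \mu(t)X_0$ along the orbit by integrating the scalar ODE $\dot\mu = f\circ\Phi_t$), while $\Phi_{-t}^*Y = Y$ since $Y$ is invariant under its own flow.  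Hence $\Phi_{-t}^*(X\wedge Y) = \mu(t)\,(X\wedge Y)$, so this $2$-form vanishes at $p$ iff it vanishes at $\Phi_t(p)$; that is exactly $Y$-invariance of $\msf{Dep}_\FF(X,Y)$.

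The main obstacle I anticipate is bookkeeping the domains of definition: the $X$- and $Y$-flows are only \emph{local} flows between open subsets of $M$, so I must be careful that the ODE/uniqueness arguments are carried out on the (open, connected) interval of times for which the relevant orbit is defined, and that the conclusions ``$\Phi_t(p)\in\Z X$'' etc.\ are asserted exactly on that interval — which is all that ``$X$-invariant'' and ``$Y$-invariant'' require in the paper's definitions.  A secondary point of care is the regularity needed to invoke uniqueness of solutions: $X,Y\in\V^1(M)$ gives $C^1$ flows, and $f$ is merely continuous, but the scalar linear ODE $\dot v = (f\circ\Phi_t)v$ has continuous coefficients and hence unique solutions, so no extra smoothness is needed.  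Beyond that the argument is a routine application of the Lie-derivative formula together with uniqueness for linear ODEs.
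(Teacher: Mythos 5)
Your argument is essentially the paper's: both rest on the Lie-derivative identity $\tfrac{d}{dt}\Phi_{-t}^{*}X=\Phi_{-t}^{*}[Y,X]$, the observation that tracking turns this into a scalar-coefficient linear ODE along $Y$-orbits, and uniqueness of solutions; the paper phrases this in flowbox coordinates as $\tfrac{d}{dt}X_{y(t)}=g(t)X_{y(t)}$, while you phrase it via the pullback $\Phi_{-t}^{*}X$, but these are the same computation. For the dependency set, your route is slightly cleaner and more direct: you show $\Phi^{Y*}_{-t}(X\wedge Y)_p=\mu(t)(X\wedge Y)_p$, whereas the paper instead shows $T\Phi^Y_t(X_p)\wedge_\FF X_{y(t)}\equiv 0$ and then implicitly needs a short parallelism argument (with the case $X_p=0$ handled by the first part) to translate this back into $X_{y(t)}\wedge_\FF Y_{y(t)}=0$. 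You also spell out the $X$-invariance of $\msf{Dep}_\FF(X,Y)$ (via $[X,Y]=-fX$ and the fact that the extra term lies along $X$, hence is killed by the wedge), which the paper's proof does not explicitly write down even though the statement asserts it. One typographical slip: your scalar ODE should read $\dot\mu=(f\circ\Phi_t)\,\mu$ rather than $\dot\mu=f\circ\Phi_t$, but the intended integrating-factor solution $\mu(t)=\exp\!\big(\int_0^t f(\Phi_s(p))\,ds\big)$ is clearly what you mean and the rest of the argument is unaffected.
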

\begin{proof}
As the statement is local, we assume $M$ is an open set in $\F n$.

{\em Invariance of $\Z X$: } Evidently $\Z X$ is $X$-invariant and $\Z
X\cap \Z Y$ is  $Y$-invariant.
To show that
$\Z X\,\verb=\=\,\Z Y$ is $Y$-invariant, fix $p\in \Z X\,\verb=\=\,\Z Y $.
Let $(y_1,\dots,y_n)$ be flowbox coordinates in a neighborhood $V_p$
of $p$, representing $Y|V_p$ as $\pd{y_1}$ in a convex open subset of
$\R n$, and the $Y$-trajectory of $p$ as
\[ t\mapsto y(t):=p + te_1\]
where $e_1,\dots, e_n \in\F n$ are the standard basis vectors.

  Let $J_p\subset \RR$ be an open interval around $0$ such that
\[y(t)\in V_p, \qquad  (t\in J_p).\]
Then 
\begin{equation}                \label{eq:tpys}
 \ode{~}t\big(T\Phi^Y_t (X_p)\big) = [Y, X]_{y(t)}, \qquad (t\in J_p).
\end{equation}
Since $Y$ tracks $X$, there is a continuous $\FF$-valued function
$t\mapsto g(t)$ such that in the flowbox coordinates for $Y$, the
vector-valued function $t\mapsto X_{y(t)}$ satisfies the linear
initial value problem
\begin{equation}                \label{eq:tphi1}
 \ode{~}t X_{y(t)} = g(t) X_{y(t)}, \qquad X_p=0.
\end{equation}
Therefore $X_{y(t)}$ vanishes identically in $t$. 

{\em Invariance of $\msf D(X, Y)$: }
We need to prove:  for all $t\in J_p$,
\begin{equation}                \label{eq:tphi2}
X_p\wedge_\FF Y_p = 0 \implies  T\Phi^Y_t (X_p)\wedge_\FF X_{\Phi^Y_t(p)} =0.
 \end{equation}
Assume $Y_p\ne 0$ and fix flowbox
coordinates for $Y$ at $p$. 
It suffices to verify (\ref{eq:tphi2}) for all $t\in J_p$.  Equations
(\ref{eq:tpys}) and (\ref{eq:tphi1}) imply
\[ 
\begin{split}
\ode{~}t \big(T\Phi^Y_t (X_p)\wedge_\FF X_{y(t)}\big)
      &= \big([Y, X]\wedge_\FF X\big)_{y(t)} +
         T\Phi^Y_t (X_p)\wedge_\FF g(t) X_{y(t)}\\  
     &= 0 \ \text{ identically in $t$.}
\end{split}
\]
As (\ref{eq:tphi2}) holds for $t=0$, the proof is complete.
\end{proof}

The proof of the following result is similar and left to the reader:
\begin{proposition}               \mylabel{th:orbits}
If $M$ is real and $Y$ tracks $X$, each map $\Phi^Y_t$ sends orbits of
$X|\mcal D \Phi^Y_t$ to orbits of $X|\mcal R \Phi^Y_t$.\qed
\end{proposition}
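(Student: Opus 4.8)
The plan is to follow the template of Proposition~\ref{th:ideal}. The tracking relation $[Y,X]=fX$ forces the local flow of $Y$ to carry the line spanned by $X$ at each point into the line spanned by $X$ at the image point, and moreover with a \emph{positive} proportionality factor; since the unparametrised orbit of a vector field is unchanged when the field is multiplied by a positive function, this is exactly what is needed.

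Concretely, I would first fix $t$ and a point $q\in\mcal D\Phi^Y_t$ and examine, for $s$ in an interval containing $0$ and $t$, the vector $v(s):=\big(T_q\Phi^Y_s\big)^{-1}\big(X_{\Phi^Y_s(q)}\big)$, i.e.\ the value at $q$ of the pulled-back field $(\Phi^Y_s)^{\ast}X$. As in the derivation of (\ref{eq:tphi1}) --- intrinsically from $\tfrac{d}{ds}(\Phi^Y_s)^{\ast}X=(\Phi^Y_s)^{\ast}[Y,X]=(\Phi^Y_s)^{\ast}(fX)$, or in flowbox coordinates for $Y$ when $Y_q\neq 0$ --- one checks that $v$ is $C^1$ and solves the scalar linear problem $\dot v(s)=f(\Phi^Y_s(q))\,v(s)$, $v(0)=X_q$. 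Hence $v(s)=c(s)X_q$ with $c(s)=\exp\!\big(\int_0^s f(\Phi^Y_\sigma(q))\,d\sigma\big)>0$, so
\[
  T_q\Phi^Y_t\big(X_q\big)\;=\;\mu(q)\,X_{\Phi^Y_t(q)},\qquad \mu(q):=c(t)^{-1}>0 .
\]
This simultaneously re-proves the $Y$-invariance of $\Z X$ (take $X_q=0$) and shows that wherever $X_q\neq 0$ the isomorphism $T_q\Phi^Y_t$ sends the oriented line $\RR X_q$ onto the oriented line $\RR X_{\Phi^Y_t(q)}$.

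Then I would pass from vectors to orbits. Write $U=\mcal D\Phi^Y_t$, $V=\mcal R\Phi^Y_t$; these are open, $\Phi^Y_t\co U\to V$ is a $C^1$ diffeomorphism, and $q\mapsto\mu(q)$ is continuous and positive on $U$. Take a maximal integral curve $\gamma\co I\to U$ of $X|U$. If $\gamma$ is a singleton at a zero of $X$, the identity above shows its image maps to a zero of $X|V$. Otherwise $\delta:=\Phi^Y_t\circ\gamma$ satisfies $\dot\delta(s)=\mu(\gamma(s))\,X_{\delta(s)}$, and the orientation-preserving $C^1$ time change $\tau(s)=\int_{s_0}^{s}\mu(\gamma(u))\,du$ turns $\delta$ into a genuine integral curve of $X|V$ with unchanged image $\Phi^Y_t(\gamma(I))$; applying the same reasoning to $\Phi^Y_{-t}\co V\to U$ shows this curve is maximal. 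Hence $\Phi^Y_t$ carries the orbit $\gamma(I)$ of $X|\mcal D\Phi^Y_t$ onto an orbit of $X|\mcal R\Phi^Y_t$, as claimed. I do not expect a serious obstacle: the only points needing attention are the bookkeeping with the partially-defined maps $\Phi^Y_{\pm t}$, the reparametrisation-and-maximality step, and the fact that $s\mapsto(\Phi^Y_s)^{\ast}X$ is $C^1$ with the stated derivative even though $f$ is merely continuous --- but this last point is precisely the regularity already used (implicitly) in Proposition~\ref{th:ideal}, and uniqueness for the scalar linear ODE with continuous coefficient is what forces $v(s)=c(s)X_q$.
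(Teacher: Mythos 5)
Your proposal is correct and takes the same route the paper intends: the author states the proof is ``similar'' to that of Proposition~\ref{th:ideal} and leaves it to the reader, and your pullback computation is precisely that argument, showing via the linear ODE $\dot v(s)=f(\Phi^Y_s(q))\,v(s)$ with scalar coefficient that $T_q\Phi^Y_t(X_q)$ is a positive scalar multiple of $X_{\Phi^Y_t(q)}$. The added reparametrisation and maximality bookkeeping needed to pass from tangent vectors to unparametrised orbits is routine and correctly handled.
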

When $M$ is complex
 there is a simliar result for the holomorphic local
actions of $\CC$ on $M$ generated by  $X$ and $Y$.
\section{The index function}   \mylabel{sec:indices}
In this section $M$ is a real surface of dimension $n\ge 1$ with
empty boundary.  
Assume $X\in \V (M)$, $K$ is an $X$-block, and the
precompact open set $U\subset M$ is isolating for $(X, K)$.
\begin{definition}              \mylabel{th:defdeform}
A {\em  deformation} from
$X$ to $X'$ is path in $\V (M)$ of the form 
\[
  t\mapsto X^t, \quad X^0:=X, \quad X^1= Y, \qquad (0\le t \le 1).
\]
The deformation is {\em nonsingular} in a set $S\subset M$ provided
$\Z{X^t}\cap S=\empty$.  
\end{definition}

\begin{proposition}             \mylabel{th:convex}
$X$ has arbitrarily small convex open neighborhoods $\mcal B\subset\V
  (M)$ such that for all $Y, Z\in \mcal B$:
\begin{description}

\item[(i)] $U$ is isolating for  $Y$,

\item[(ii)]  the deformation $Y^t:= (1-t)Y + tZ,\ (0\le t\le 1)$ is
  nonsingular in $\fr U$.  

\end{description}
These conditions imply:
\begin{description}

\item[(iii)] the set of $Y\in \mcal B$ such that 
$\Z Y\cap U$ is finite contains a dense open subset \\ of $\mcal B$.
\end{description}
\end{proposition}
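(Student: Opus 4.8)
\textbf{Proof proposal for Proposition \ref{th:convex}.}

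The plan is to produce the neighborhood $\mcal B$ as an intersection of three open sets, handling (i), (ii), (iii) in turn, and to use the compactness of $\fr U$ and $\ov U$ throughout. First, for (i): since $U$ is isolating for $(X,K)$ we have $\Z X\cap\fr U=\emptyset$, so $\|X_p\|$ is bounded below by some $\epsilon>0$ on the compact set $\fr U$ (in a fixed auxiliary Riemannian metric). The set of $Y\in\V(M)$ with $\sup_{p\in\ov U}\|Y_p-X_p\|<\epsilon$ is a convex open neighborhood of $X$ in the compact-open topology, and any such $Y$ is nonzero on $\fr U$, hence $U$ is isolating for $Y$ as well (its closure is still compact and $\Z Y\cap\fr U=\emptyset$). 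This is the easy part.

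Next, for (ii): if $Y,Z$ both lie in the neighborhood just constructed, then for every $p\in\fr U$ and every $t\in[0,1]$ we have $\|(1-t)Y_p+tZ_p-X_p\|\le(1-t)\|Y_p-X_p\|+t\|Z_p-X_p\|<\epsilon$, so $(1-t)Y_p+tZ_p\ne 0$; thus the straight-line deformation is automatically nonsingular on $\fr U$. So (i) and (ii) already hold on a single convex open $\mcal B_0$, and these can be taken arbitrarily small by shrinking $\epsilon$. The only real work is (iii): showing that inside $\mcal B_0$ the vector fields whose zero set in $U$ is finite form a dense open (equivalently, residual-with-open) subset. Here I would invoke the analytic/perturbation machinery the paper sets up later — the approximation results alluded to as Theorems \ref{th:stability}, \ref{th:approx} — but for this proposition a more elementary argument suffices: take any $Y\in\mcal B_0$, and on the compact set $\ov U$ perturb $Y$ within $\mcal B_0$ to a $C^\infty$ (indeed analytic) vector field $Y'$; then choose a regular value $v\in\F n$ of the map $Y'\colon U\to\F n$ (using that $M$ has dimension $n$, working in local charts and Sard's theorem, patched by a partition of unity over finitely many charts covering $\ov U$) with $\|v\|$ small, and replace $Y'$ by $Y'-v$. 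The zero set of $Y'-v$ in $U$ is then a $0$-dimensional submanifold, hence discrete, hence — being contained in the compact set $\ov U$ with no zeros on $\fr U$ — finite. Openness of the ``finite zero set in $U$'' condition at such a $Y'-v$: when $v$ is a regular value the zeros are nondegenerate, so by the implicit function theorem they persist and stay isolated under small perturbations, and no new zeros can escape to $\fr U$; a compactness argument on $\ov U$ bounds their number locally uniformly.

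The main obstacle is the openness half of (iii): ``finitely many zeros in $U$'' is not an open condition for a completely arbitrary continuous $Y$ (a small bump can split one zero into a Cantor set), so the statement must be read as asserting the existence of a \emph{dense open} subset of $\mcal B$ with the property, not that the property itself is open at every point. The way to get genuine openness is to locate the dense points at vector fields with only \emph{nondegenerate} zeros in $U$ — there the count is locally constant — and then density follows from the Sard/regular-value argument above together with the fact that such perturbations can be kept inside the convex set $\mcal B_0$ (convexity is what lets us subtract the constant $v$ and stay in $\mcal B$, after first shrinking to leave room). I would also remark that once (i) and (ii) hold, (iii) for $Y$ is equivalent to (iii) for $X$ via the nonsingular deformation, so it is enough to establish the dense-open set near $X$ and intersect with $\mcal B_0$. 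I expect the write-up to be short: the first two items are three lines, and (iii) is one paragraph citing Sard plus the implicit function theorem, with the analytic refinement deferred to the later approximation theorems if needed.
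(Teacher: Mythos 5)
Your arguments for (i) and (ii) are correct and fill in precisely what the paper compresses into ``follow from the definition of the compact-open topology'': positivity of $\min_{\fr U}\|X\|$ on the compact frontier gives (i) for any $\epsilon$-ball $\mcal B_0$, and convexity of the ball plus the triangle inequality gives (ii) for free. The density half of (iii) via smoothing and Sard (subtract a small regular value, patched by a partition of unity over a finite chart cover of $\ov U$; convexity of $\mcal B_0$ lets you stay inside) is also the right argument and is what ``standard approximation theory'' abbreviates.

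There is, however, a genuine gap in your treatment of the \emph{openness} half of (iii), and you contradict your own earlier observation in making it. You correctly note that a $C^0$-small perturbation can turn an isolated zero into a Cantor set. But that Cantor-set construction applies just as well at a \emph{nondegenerate} zero: nondegeneracy is a $C^1$-open condition, not a $C^0$-open one, so ``there the count is locally constant'' is false in the compact-open topology on $\V(M)$ (which the paper explicitly defines on \emph{continuous} vector fields). Concretely, near a linear saddle one can multiply the first component by a continuous factor $1-\psi$ with $\psi=1$ on a thin Cantor set of the $x$-axis and $\psi$ supported in a tiny box, producing a nearby continuous field with uncountably many zeros. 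So the set of $Y$ with nondegenerate zeros is dense in $\mcal B_0$ but is \emph{not} a neighborhood of any of its points in $\V(M)$. Your final parenthetical remark --- that (iii) for $Y$ is ``equivalent to (iii) for $X$ via the nonsingular deformation'' --- also doesn't parse, since (iii) is a statement about all of $\mcal B$, not about an individual field.

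To be fair, the paper's own proof of (iii) is the single sentence ``Standard approximation theory gives (iii),'' which does not address this either; and in the sequel only the \emph{density} is actually used (Definition \ref{th:defindex} needs a nearby $X'$ with finitely many zeros, not an open set of them). So the substantive part of your argument --- the Sard-based density --- is what matters downstream, and it is correct. But as a proof of (iii) as literally stated, the openness step is unjustified, and you should not claim $C^0$-stability of nondegenerate zeros.
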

\begin{proof}
(i) and (ii) follow from the definition of the compact-open
  topology on $\V (M)$.  Standard approximation theory gives  (iii). 
\end{proof}

\begin{definition}              \mylabel{th:defph}
When $K$ is finite, the {\em Poincar\'e-Hopf index} of $X$ at $K$, and
in $U$, is the integer $i^{PH}_K(X)=\msf i^{PH} (X, U)$ defined as
follows. 
 For each $p\in K$ choose an open set $W\subset U$ meeting
$K$ only at $p$, such that $W$ is
the domain of a  
$C^1$ chart
\[\phi\co W\approx W' \subset \R n, \quad \phi(p)=p'.
\]
 The transform of
 $X$ by $\phi$ is 
\[X':=T\phi\circ X \circ \phi^{-1} \in \V (W').
\]
There is a unique map of pairs
\[
  F_p\co (W', 0) \to \R n, 0)
\]
that expresses $X'$ by the formula 
 \[
   X'_x=\big(x, F_p(x)\big) \in \{x\}\times \R n, \qquad (x\in W'). 
\]
Noting that $F^{-1} (0)=p$, we define 
$\msf i^{PH}_p(X)\in\ZZ$ as  the degree of the map defined for
any sufficiently small $\eps >0$ as
\[
 \S{n-1}\to\S{n-1},\quad u\mapsto \frac{F_p (\eps u)}{\|F_p (\eps u) \|}\,.
\]
 This degree is independent of $\eps$ and the chart $\phi$, by standard
properties of the degree function.   Therefore the integer 

\[\textstyle
 \msf
i^{PH}_K(X)=i^{PH} (X, U):=
\begin{cases}
        \sum_{p\in K} i^{PH}_p(X) \, &\text{if} \,K\ne\varnothing,\\
        0\, &\text{if} \,K =\empty.
\end{cases}
\]
is well defined and  
depends only on $X$ and $K$. 
\end{definition}

\begin{proposition}             \mylabel{th:xtfru}
Let $\{X^t\}$ be a deformation that is nonsingular in $\fr U$.  If
both $\Z{X^0}\cap  U$ and $\Z{X^1}\cap U$ are finite, then
\[
 i^{PH} (X, U) = i^{PH} (X^1, U).
\]
\end{proposition}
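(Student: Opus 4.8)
\textbf{Proof proposal for Proposition~\ref{th:xtfru}.}
The plan is to reduce the statement to the homotopy invariance of topological degree, exactly as in the classical proof that the Poincar\'e--Hopf index is invariant under nonsingular homotopies on the boundary. First I would choose a compact manifold-with-boundary $N\subset U$ that is isolating for every $X^t$ simultaneously: since the deformation is nonsingular in $\fr U$ and $[0,1]$ is compact, there is a compact neighborhood $L$ of $\fr U$ in $U$ on which no $X^t$ vanishes, and shrinking $U$ slightly to a smooth compact domain $N$ with $\ov U\setminus L\subset \Int N\subset N\subset U$ gives $\Z{X^t}\cap \p N=\emp$ for all $t$, while $\Z{X^0}\cap N=\Z{X^0}\cap U$ and likewise for $X^1$. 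Then $i^{PH}(X^0,U)=i^{PH}(X^0,N)$ and $i^{PH}(X^1,U)=i^{PH}(X^1,N)$, so it suffices to prove the equality with $U$ replaced by $N$.

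Next I would pass to the boundary. On $\p N$ the vector field $X^t$ is nowhere zero, so it defines a map $\p N\to \F n\setminus\{0\}$; composing with normalization (using a Riemannian metric, or a trivialization of $\tau(M)|_{\ov N}$ after subdividing into chart pieces) gives a continuous family of maps from $\p N$ to $\S{n-1}$, hence all homotopic. The standard identification of the sum of local Poincar\'e--Hopf indices inside $N$ with the degree of the normalized field on $\p N$ — obtained by excising small balls around the finitely many zeros and applying additivity of degree, together with the definition in Definition~\ref{th:defph} — yields $i^{PH}(X^0,N)=\deg(\widehat{X^0}|\p N)=\deg(\widehat{X^1}|\p N)=i^{PH}(X^1,N)$, where $\widehat{(\cdot)}$ denotes the normalized field. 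This requires knowing that $\Z{X^0}\cap N$ and $\Z{X^1}\cap N$ are finite, which is part of the hypothesis, so the excision step is legitimate.

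The main technical obstacle is the globalization: $M$ need not be parallelizable and $\p N$ need not bound in a trivialized region, so "the degree of the normalized field on $\p N$" is not literally a map to a fixed sphere. I would handle this by the usual device of triangulating $\ov N$ finely enough that each closed simplex lies in a chart domain, expressing the index as a sum of local contributions computed in charts (as in Definition~\ref{th:defph}), and checking that the contributions on the shared faces cancel so that only the $\p N$ terms survive; the homotopy $X^t$ then moves each local contribution continuously while keeping it integer-valued on the boundary simplices, hence constant. An alternative, cleaner route is to invoke the description in the footnote to Definition~\ref{th:defindex}: $i^{PH}(X,N)$ is the intersection number of the section $X|_N$ with the zero section of $\tau(M)$, and intersection number is invariant under homotopies of sections that stay away from the zero section near $\p N$ — which is precisely the hypothesis that $\{X^t\}$ is nonsingular on $\fr U$. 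I expect to present the intersection-number argument as the main line and relegate the chart-by-chart bookkeeping to a remark, since it is routine but notation-heavy.
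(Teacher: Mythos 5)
Your proposal is correct and takes essentially the same approach as the paper's one-sentence proof, which simply refers the reader to the standard homotopy-invariance result for intersection numbers (Hirsch, \emph{Differential Topology}, Theorem 5.2.1) — exactly the ``cleaner route'' via intersection with the zero section that you identify and propose to present as the main line. Your chart-by-chart alternative is also sound, though note that for intermediate $t$ the set $\Z{X^t}\cap U$ need not be finite, so that version must be phrased in terms of the boundary degree on $\p N$ being locally constant in $t$ rather than in terms of tracking individual local indices.
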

\begin{proof} The proof is similar to that of a standard result on homotopy
  invariance of intersection numbers in oriented manifolds
(compare  {\sc Hirsch} \cite[Theorem   5.2.1]{Hirsch76}). 
\end{proof}

\begin{definition}              \mylabel{th:supp}
The {\em support} of a deformation $\{X^t\}$ is the closed set
\[
 \mathsf {supp}\{X^t\}:=\big\{p\in M\co
X^t_p =X^0_p, \quad{0\le t\le 1} \big\}.
\]
The deformation is {\em compactly supported in  $S$} provided 
$ \mathsf {supp}\{X^t\}$ is a compact subset of $S$. 
\end{definition}

\begin{definition}              \mylabel{th:defindex}
The {\em index of $X$ in $U$} is 
%
\begin{equation}                \label{eq:defindex}
\msf i (X, U):=\msf i^{PH} (X', U)
\end{equation}
%
where $X'$ is any vector field on $M$ such that $\Z {X'}\cap U$ is
finite and there is a deformation from $X$ to $X'$  compactly
supported in $\Int(U)$.  This integer is well defined because the
right hand side of Equation (\ref{eq:defindex}) depends only on $X$
and $U$, by Proposition \ref{th:xtfru}.
The notation $\msf i (X,U)$ tacitly assumes $U$ is isolating for
$X$.  
\end{definition}

\begin{lemma}           \mylabel{th:isol}
If   $U$ and $ U_1$ are  isolating for $(X, K)$ then 
$\msf i (X, U)=\msf i (X, U_1)$.
\end{lemma}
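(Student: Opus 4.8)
The plan is to reduce to the case where one isolating neighborhood contains the other, and then exploit the definition of $\msf i (X, U)$ via compactly supported deformations. First I would observe that it suffices to treat the situation $U_1 \subset U$: given two arbitrary isolating neighborhoods $U$ and $U_1$ for $(X,K)$, their intersection $U_0 := U \cap U_1$ is an open set with $\ov{U_0}$ compact and $\Z X \cap \ov{U_0} = K$ (since $K \subset U_0$ and $\Z X \cap \ov U = K$), so $U_0$ is again isolating for $(X,K)$; proving $\msf i(X,U) = \msf i(X,U_0)$ and $\msf i(X,U_1) = \msf i(X,U_0)$ then gives the result by transitivity. So assume henceforth $U_1 \subset U$.

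Next I would choose, using Proposition \ref{th:convex} applied with the isolating neighborhood $U_1$, a vector field $X'$ together with a deformation $\{X^t\}$ from $X$ to $X'$ that is compactly supported in $\Int(U_1)$ and has $\Z{X'} \cap U_1$ finite. The key point is that this same deformation also witnesses the index computed in the larger neighborhood $U$: because $\mathsf{supp}\{X^t\}$ is a compact subset of $\Int(U_1) \subset \Int(U)$, the deformation is compactly supported in $\Int(U)$ as well. Moreover $X^t_p = X^0_p = X_p$ for every $p \in U \setminus U_1$, so $\Z{X^t} \cap (\ov U \setminus U_1) = \Z X \cap (\ov U \setminus U_1) = \emptyset$ for all $t$; in particular $\Z{X'} \cap U = \Z{X'} \cap U_1$ is finite, and the deformation is nonsingular on $\fr U$ and on $\fr{U_1}$. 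Hence, directly from Definition \ref{th:defindex},
\[
\msf i (X, U) = \msf i^{PH}(X', U) = \sum_{p \in \Z{X'} \cap U} \msf i^{PH}_p(X'), \qquad
\msf i (X, U_1) = \msf i^{PH}(X', U_1) = \sum_{p \in \Z{X'} \cap U_1} \msf i^{PH}_p(X').
\]
Since $\Z{X'} \cap U = \Z{X'} \cap U_1$, these two sums are identical, giving $\msf i(X,U) = \msf i(X,U_1)$.

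The main thing to be careful about is the interplay between Proposition \ref{th:convex} and Definition \ref{th:defindex}: I must make sure the $X'$ I pick is simultaneously admissible for both neighborhoods, which is exactly why I insist the deformation be supported in $\Int(U_1)$ rather than merely in $\Int(U)$, and why the reduction to the nested case is convenient. A secondary point worth checking is that $\msf i^{PH}$ is genuinely additive over the finite zero set and insensitive to enlarging the ambient isolating set once the zeros are pinned down away from the frontier — but this is immediate from Definition \ref{th:defph}, where $\msf i^{PH}(X',U)$ is defined as the sum of the local indices $\msf i^{PH}_p(X')$ over $p \in \Z{X'} \cap U$, each computed in a small chart around $p$ and thus unaffected by which isolating neighborhood we regard $p$ as sitting in. There is no real obstacle here; the lemma is essentially a bookkeeping consequence of the definitions, and the only finesse is organizing the choice of $X'$ so that it serves both neighborhoods at once.
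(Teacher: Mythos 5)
Your proof is correct and follows essentially the same route as the paper: reduce to the nested case and observe that a deformation compactly supported in the smaller neighborhood simultaneously computes the index in the larger one, because outside the support the field is unchanged and has no zeros. The only cosmetic difference is that you intersect directly ($U_0 = U \cap U_1$) whereas the paper chooses a further-shrunk $W$ with $\ov W \subset U \cap U_1$; both work, since $U \cap U_1$ is already isolating for $(X,K)$.

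One small attribution quibble: you cite Proposition \ref{th:convex} as supplying the compactly supported deformation with finite zero set, but that proposition only provides convex neighborhoods and nonsingularity on $\fr U$ for linear interpolations. The existence of the required $X'$ and compactly supported deformation is really what is already assumed (and used without further justification) in Definition \ref{th:defindex}; neither you nor the paper's own proof spells out its construction, so this does not affect the correctness of your argument relative to the paper's.
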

\begin{proof} 
Let $W$ be isolating for $(X, K)$, with $\ov W\subset U_1 \cap U$.
It suffices to show that 
$\msf i (X, U)=\msf i (X, W)$,  for
this also implies $\msf i (X, U_1)=\msf i (X, W$. 
 By definition, 
$\msf i (X, W)=\msf i^{PH}(X', W)$ provided $X$ and $X'$ are
homotopic by deformation with compact support in $W$ and $\Z{Y^1}\cap
W$ is finite.  Let $\{Y^t\}$ be the deformation defined by
\[
Y^t_p=\begin{cases} X^t_p & \text{if $p\notin W$,}\\ Y^t_p & \text{if
  $p\in \ov W$}.
      \end{cases}
\]
Therefore $\msf i (X, U)=\msf i (X, W)$, because this deformation is
compactly supported in $U$ and $\Z{Y^1}\cap U$ is finite.
\end{proof}
It follows that $\msf i (X, U)$ depends only on $X$ and $K$.  The {\em
  index of $X$ at $K$} is
\[
\msf i_K (X):=\msf i (X, U).
\] 
It is easy to see that the index function enjoys the following additivity:

\begin{proposition}             \mylabel{th:add}
Let $K_1, K_2$ be disjoint $X$-blocks, with isolating neighborhoods
$U_1, U_2$ respectively.  Then
\[\begin{split}
\msf i_{K_1\cup K_2} (X) & =  \msf i_{ K_1} (X)   + \msf i_{K_2}  (X), \\
\msf i (X, U_1 \cup U_2) & =  \msf i (X, U_1)   + \msf i (X, U_2). 
\end{split}
\]
\end{proposition}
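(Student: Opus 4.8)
The plan is to reduce everything to the already-established case of a single isolating neighborhood by using disjointness to split the problem. First I would observe that since $K_1$ and $K_2$ are disjoint compact subsets of $\Z X$, I may shrink $U_1$ and $U_2$ so that $\ov{U_1}\cap\ov{U_2}=\emp$ while each $U_i$ remains isolating for $(X,K_i)$; this is possible because $\Z X\cap\ov{U_i}=K_i$ already, and shrinking only removes points from the intersection with $\Z X$. Set $U:=U_1\cup U_2$. Then $\ov U=\ov{U_1}\sqcup\ov{U_2}$ is compact and $\Z X\cap\ov U=K_1\sqcup K_2$, so $U$ is isolating for $(X,K_1\cup K_2)$, and by Lemma \ref{th:isol} the quantities $\msf i_{K_1\cup K_2}(X)$ and $\msf i(X,U)$ coincide, as do $\msf i_{K_i}(X)$ and $\msf i(X,U_i)$; hence the two displayed equations are equivalent and it suffices to prove the second.

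Next I would choose, via Definition \ref{th:defindex}, a vector field $X'$ together with a deformation $\{X^t\}$ from $X$ to $X'$ that is compactly supported in $\Int(U)=\Int(U_1)\sqcup\Int(U_2)$ and with $\Z{X'}\cap U$ finite. Because the support is a compact subset of the disjoint union $\Int(U_1)\sqcup\Int(U_2)$, it splits as a disjoint union of a compact piece in $\Int(U_1)$ and one in $\Int(U_2)$; thus $\{X^t\}$ restricts to deformations compactly supported in $\Int(U_1)$ and in $\Int(U_2)$ respectively, each ending at $X'$ (whose zero set meets each $U_i$ in a finite set). By the very definition of the index, $\msf i(X,U_i)=\msf i^{PH}(X',U_i)$ for $i=1,2$ and $\msf i(X,U)=\msf i^{PH}(X',U)$. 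So the claim is reduced to additivity of the Poincar\'e--Hopf index: $\msf i^{PH}(X',U_1\cup U_2)=\msf i^{PH}(X',U_1)+\msf i^{PH}(X',U_2)$.

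Finally, this last identity is immediate from Definition \ref{th:defph}: since $\Z{X'}\cap\ov U$ is finite and partitioned into $\Z{X'}\cap\ov{U_1}$ and $\Z{X'}\cap\ov{U_2}$ by disjointness of the closures, the defining sum $\sum_{p\in\Z{X'}\cap\ov U}\msf i^{PH}_p(X')$ breaks as the sum of the two corresponding sums over $\ov{U_1}$ and $\ov{U_2}$. I do not expect a genuine obstacle here; the only point requiring a little care is the opening reduction, namely arranging $\ov{U_1}\cap\ov{U_2}=\emp$ without destroying the isolating property, and invoking Lemma \ref{th:isol} to see that the index is unchanged under this shrinking. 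Everything after that is bookkeeping with the definitions.
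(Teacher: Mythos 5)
Your argument is correct and is the natural proof of this statement; the paper itself offers no written argument, asserting the additivity only as ``easy to see.'' Your reduction---shrink $U_1,U_2$ to have disjoint closures, split the compactly supported deformation over the two components of $\Int(U_1)\sqcup\Int(U_2)$, invoke Lemma~\ref{th:isol}, and use that the Poincar\'e--Hopf index in Definition~\ref{th:defph} is a finite sum over zeros---is exactly the bookkeeping the paper leaves implicit (the only phrase to tighten is ``each ending at $X'$'': the restricted deformations end at fields agreeing with $X'$ only on the respective $U_i$, which suffices because $\msf i^{PH}(\,\cdot\,,U_i)$ depends only on the restriction to $U_i$).
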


The following property is crucial: 
\begin{theorem}[{\sc  Stability}]   \mylabel{th:stability}
Let $U\subset M$ be  isolating for $X$. 
\begin{description}

\item[(a)] If $\msf i (X, U)\ne 0$ then $\Z X\cap U\ne\varnothing$. 

\item[(b)] If   $Y$ is sufficiently close to $X$
then $U$ is isolating for $Y$ and   $\msf i (Y,  U)=\msf i (X, U)$.

 \item[(c)]  Let $\{X^t\}$ be a deformation of $X$ that is nonsingular
   in $\fr U$.  Then 
\[\msf i (X^t,  U)=\msf i (X, U), \qquad (0\le t\le 1).
\]
\end{description}
\end{theorem}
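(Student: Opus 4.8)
The plan is to reduce all three statements to the Poincar\'e--Hopf index $\msf i^{PH}$ of a vector field with finitely many zeros in $U$, where the corresponding facts are elementary degree theory, and then transport them across the deformations used to define $\msf i(X,U)$ in Definition~\ref{th:defindex}. First I would prove (a). By Definition~\ref{th:defindex}, $\msf i(X,U)=\msf i^{PH}(X',U)$ for some $X'$ with $\Z{X'}\cap U$ finite. If $\Z X\cap U=\varnothing$, then $X$ itself has no zeros in the precompact isolating set $U$; by Proposition~\ref{th:convex} (applied with a convex neighborhood $\mcal B$ of $X$ small enough that every member is nonvanishing on $\ov U$) we may take $X'\in\mcal B$, so $X'$ is also nonvanishing on $U$, and then each local map $u\mapsto F_p(\eps u)/\|F_p(\eps u)\|$ in Definition~\ref{th:defph} is never even defined near a would-be zero---more precisely $\Z{X'}\cap U=\varnothing$ forces the sum in Definition~\ref{th:defph} to be empty, so $\msf i^{PH}(X',U)=0$. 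Contrapositively, $\msf i(X,U)\ne 0\Rightarrow\Z X\cap U\ne\varnothing$.

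Next, (c). Suppose $\{X^t\}$ is a deformation nonsingular in $\fr U$. For each fixed $t$, $U$ is still isolating for $X^t$ because $\Z{X^t}\cap\fr U=\varnothing$ and $\ov U$ is compact. I would reduce to the finite case: using Proposition~\ref{th:convex}(iii) and a standard approximation, perturb the whole path $\{X^t\}$ slightly, with support in $\Int U$, to a path $\{\widetilde X^t\}$ that is still nonsingular in $\fr U$, agrees with a compactly-supported-in-$\Int U$ deformation of $X^0$ and of $X^1$ at the endpoints, and has $\Z{\widetilde X^t}\cap U$ finite for $t=0$ and $t=1$; then $\msf i(X^0,U)=\msf i^{PH}(\widetilde X^0,U)$ and $\msf i(X^1,U)=\msf i^{PH}(\widetilde X^1,U)$, and these agree by Proposition~\ref{th:xtfru}. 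Finally, for intermediate $t_0$, apply the same argument to the subpaths $\{X^t\}_{0\le t\le t_0}$ and $\{X^t\}_{t_0\le t\le 1}$, each nonsingular in $\fr U$; this gives $\msf i(X^{t_0},U)=\msf i(X^0,U)$ for every $t_0$.

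Statement (b) then follows from (c) together with Proposition~\ref{th:convex}: choose $\mcal B$ a convex neighborhood of $X$ as in that proposition, so that for $Y\in\mcal B$ the set $U$ is isolating for $Y$ (condition (i)) and the straight-line deformation $X^t:=(1-t)X+tY$ is nonsingular in $\fr U$ (condition (ii)); applying (c) to this deformation yields $\msf i(Y,U)=\msf i(X,U)$.

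The main obstacle I anticipate is the reduction in (c) to vector fields with \emph{finitely many} zeros in $U$ while keeping the deformation nonsingular on $\fr U$ and compactly supported in $\Int U$ at the endpoints: one must approximate a one-parameter family (not a single field), control it uniformly on the compact set $\ov U$, and ensure the perturbation does not disturb $\fr U$. This is exactly the kind of parametrized transversality/approximation argument that Proposition~\ref{th:convex} and Proposition~\ref{th:xtfru} are designed to supply, so the burden is organizational rather than conceptual; once the path is replaced by a nice one, everything is homotopy invariance of degree as in Proposition~\ref{th:xtfru}.
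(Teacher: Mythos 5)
Your proposal is correct and follows essentially the same route as the paper: parts (b) and (c) are, as you say, precisely the content of Propositions \ref{th:convex} and \ref{th:xtfru}, and part (a) reduces to the elementary fact that a vector field with no zeros in $\ov U$ has vanishing Poincar\'e--Hopf index there. The only minor divergences are that you argue (a) contrapositively while the paper argues it directly, extracting a sequence of approximants $X^n\to X$ whose zeros in $U$ must accumulate at a zero of $X$ in $\ov U$, and that your detour through Proposition \ref{th:convex} in (a) is unnecessary --- when $\Z X\cap\ov U=\emp$ one may simply take $X'=X$ (with the constant deformation) in Definition \ref{th:defindex} to get $\msf i(X,U)=\msf i^{PH}(X,U)=0$ directly.
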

\begin{proof} 
If $\msf i (X, U)\ne 0$,  Definition \ref{th:defindex} shows that $X$
is the limit of a convergent sequence $\{X^n\}$ in $\V (M)$ such that
$\Z {X^n}\cap U\ne\varnothing$. Passing to a subsequence and using
compactness of $\ov U$ shows that $\Z X\cap \ov U\ne\varnothing$,
and (a) follows because $\Z X\cap \fr U=\emp$.  Parts (b) and (c)
are implied by Propositions \ref{th:convex} and \ref{th:xtfru}.
\end{proof}

\begin{proposition}             \mylabel{th:lindep}
Assume  $X, Y\in\V (M)$  and $U\subset M$ is
isolating for both $X$ and $Y$. 
For each component $U'$ of $U$ that meets $\Z X \cup \Z Y$, let one
of the following conditions hold:
\begin{description}

\item[(a)] $X_p \ne \lam Y_p, \quad (p\in \fr {U'}, \, \lam <0)$,  
\end{description}
or
\begin{description}
\item[(b)] $X_p \ne \lam Y_p, \quad (p\in \fr {U'}, \, \lam >0)$.

\end{description}
Then $\msf i (X, U)=\msf i (Y, U)$. 
\end{proposition}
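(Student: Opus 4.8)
The plan is to reduce to the finite case and then use the homotopy invariance already established.  First I would choose, using Proposition~\ref{th:convex}, a convex open neighborhood $\mcal B\subset\V(M)$ of both $X$ and $Y$ small enough that $U$ is isolating for every vector field in $\mcal B$ and that straight-line deformations between elements of $\mcal B$ are nonsingular on $\fr U$.  By Proposition~\ref{th:convex}(iii) I may pick $X'\in\mcal B$ arbitrarily close to $X$ and $Y'\in\mcal B$ arbitrarily close to $Y$ with $\Z{X'}\cap U$ and $\Z{Y'}\cap U$ both finite.  By Theorem~\ref{th:stability}(b), $\msf i(X',U)=\msf i(X,U)$ and $\msf i(Y',U)=\msf i(Y,U)$, provided $X',Y'$ are close enough.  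So it suffices to prove the identity for $X',Y'$ in place of $X,Y$; and by taking the approximations close enough I can arrange that hypothesis (a) (respectively (b)) still holds for $X',Y'$ on each relevant $\fr{U'}$, since that hypothesis is an open condition on the frontier (a compact set) once one observes that $X_p\ne\lambda Y_p$ for all $\lambda<0$ together with $X_p,Y_p$ not both zero is stable under small perturbations.  Hence I reduce to the case where $\Z X\cap U$ and $\Z Y\cap U$ are finite, so that $\msf i=\msf i^{PH}$ is computed by Definition~\ref{th:defph}.

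Next, working component by component via the additivity of Proposition~\ref{th:add}, I fix a component $U'$ of $U$ meeting $\Z X\cup\Z Y$ and consider the straight-line deformation $X^t:=(1-t)X+tY$.  Under hypothesis (a), for $p\in\fr{U'}$ we have $X_p$ and $Y_p$ never being nonnegative multiples—wait, rather, never negative multiples—of one another with both nonzero; so $(1-t)X_p+tY_p=0$ for some $t\in(0,1)$ would force $X_p=-\frac{t}{1-t}Y_p$ with $-\frac{t}{1-t}<0$, contradicting~(a) (and the endpoint cases $t=0,1$ are excluded because $U$ is isolating, so $\fr{U'}$ misses $\Z X$ and $\Z Y$).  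Thus $\{X^t\}$ is a deformation nonsingular on $\fr{U'}$, and if the endpoints have finitely many zeros in $U'$ then Proposition~\ref{th:xtfru} gives $\msf i^{PH}(X,U')=\msf i^{PH}(Y,U')$.  Under hypothesis (b) the same argument applies to the deformation $t\mapsto(1-t)X-tY$ composed with an overall observation, or more cleanly: replace $Y$ by $-Y$, note that $\Z{-Y}=\Z Y$ and $\msf i^{PH}(-Y,U')=\msf i^{PH}(Y,U')$ when $\dim M=2$ is even (the antipodal map on $\S1$ has degree $(-1)^2=1$, so the index is unchanged), and apply the case-(a) argument to the pair $(X,-Y)$, since (b) for $(X,Y)$ is exactly (a) for $(X,-Y)$.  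Summing over components $U'$ using Proposition~\ref{th:add}, and recalling that components of $U$ disjoint from $\Z X\cup\Z Y$ contribute $0$ to both $\msf i(X,U)$ and $\msf i(Y,U)$, yields $\msf i(X,U)=\msf i(Y,U)$.

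The one delicate point—and the step I expect to be the main obstacle—is the interaction between the approximation step and the frontier hypothesis: I must make sure that after replacing $X,Y$ by nearby $X',Y'$ with finite zero sets, condition~(a) or~(b) is preserved on the new frontiers of the (unchanged) components of $U$.  This is where one uses that $\{\,(\xi,\eta)\in\F2\times\F2 : \xi\ne\lambda\eta\ \forall\lambda\le0,\ (\xi,\eta)\ne(0,0)\,\}$ is open, together with compactness of $\fr U$ and continuity, so a uniform perturbation size works; one also has to know the components of $U$ meeting $\Z{X'}\cup\Z{Y'}$ are among those meeting a slightly enlarged neighborhood of $\Z X\cup\Z Y$, which follows since zeros of $X',Y'$ near $\fr U$ are excluded by smallness of the perturbation (Theorem~\ref{th:stability}(b) gives that $U$, hence each $U'$, stays isolating).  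Everything else is a bookkeeping assembly of Propositions~\ref{th:convex}, \ref{th:xtfru}, \ref{th:add} and Theorem~\ref{th:stability}, plus the parity remark $\msf i^{PH}(-Y,U')=\msf i^{PH}(Y,U')$ valid because $\dim_\FF M=2$.
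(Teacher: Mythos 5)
Your proof is correct in substance and rests on the same core idea as the paper's: the straight--line deformation $X^t:=(1-t)X+tY$ (resp.\ $(1-t)X-tY$) is nonsingular on $\fr U$ by hypothesis~(a) (resp.~(b)), so homotopy invariance of the index applies component by component, with the reduction to finitely many components carried out via additivity (Proposition~\ref{th:add}). Where you diverge is an unnecessary detour: the paper simply invokes Theorem~\ref{th:stability}(c), which is already stated for deformations of arbitrary continuous vector fields and requires no finiteness of zero sets. Your step of replacing $X,Y$ by nearby $X',Y'$ with finite zeros, checking that (a)/(b) persist, and then invoking Proposition~\ref{th:xtfru}, in effect re--derives Theorem~\ref{th:stability}(c) in this special case; it lengthens the proof and creates precisely the delicate bookkeeping you flag at the end.

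Two small slips in that detour are worth noting. First, the set you exhibit, $\{(\xi,\eta):\xi\ne\lambda\eta\ \forall\lambda\le0,\ (\xi,\eta)\ne(0,0)\}$, is \emph{not} open: $(e_1,-e_1/k)$ lies in its complement for every $k$ yet converges to $(e_1,0)$, which lies in the set. The set you actually need, with \emph{both} $\xi\ne0$ and $\eta\ne0$ imposed, is open, and those conditions do hold on $\fr U$ because $U$ is isolating for both $X$ and $Y$; so the intended openness argument is salvageable, but not as written. Second, Proposition~\ref{th:convex} produces a convex neighborhood of a \emph{single} vector field, so it does not directly furnish a $\mcal B$ containing both $X$ and $Y$; you would apply it separately and then verify nonsingularity of $(1-t)X'+tY'$ on $\fr U$ directly from (a)/(b), which you in fact do. On the positive side, your treatment of case~(b) is more careful than the paper's: you note explicitly that $(1-t)X-tY$ joins $X$ to $-Y$ and that $\msf i(-Y,U)=\msf i(Y,U)$ because the ambient real dimension is even (the antipodal map of $\S{n-1}$ has degree $(-1)^n$), a parity step the paper's ``the same argument works'' leaves implicit.
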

\begin{proof}  $U\cap \big(\Z X\cup\Z Y\big)$ is  the compact set
$\ov U \cap \big(\Z X \cup \Z Y \big)$. This implies only finitely
  many components of $U$ meet $\Z X\cup\Z Y$.
The union $U_1$ of these components is isolating for $X$ and $Y$.  The
index function is additive over 
disjoint unions, and both $X$ and $Y$ have index zero in the open set
$U \verb=\=U_1$, which is disjoint from $\Z X\cup\Z Y$. 
Therefore
 \[\begin{split}\msf i (X, U) &=\msf i (X, U_1),\\
                \msf i (Y, U) &=\msf i (Y, U_1).
\end{split}
\]
 Replacing $U$ by $U_1$,  we assume $U$ has only finitely many components.
As it suffices to prove $X$ and $Y$ have the same index in  each
  component of $U$, we also assume $U$ is connected. 

Let 
$p\in \fr U$ be arbitrary.  
If (a) holds, consider the deformation
\[
   X^t:= (1-t)X + tY,\quad (0\le t\le 1).
\]
  If $t= 0$ or $1$ then $X^t_p\ne 0$ because $U$ is isolating for $X$
  and $Y$, while if $0<t<1$ then $X^t_p\ne 0$ by (a).  Therefore the
  conclusion follows from the Stability Theorem \ref{th:stability}.
If (b) holds  the same
argument  works for the deformation $(1-t)X - tY$.
\end{proof}

\begin{proposition}             \mylabel{th:wedge}
Assume $U$ is an isolating neighborhood for both $X$ and $Y$, whose
closure $N$ is a $C^1$ submanifold such that
\begin{equation}                \label{eq:xwry}
X_p\wedge_\RR  Y_p =0, \quad (p\in \p N) 
\end{equation}
and one of the following conditions holds:
\begin{description}

\item[(a)]  $M$ is  even-dimensional

\item[(b)]   $M$ is  odd-dimensional and $X$ and $Y$ are tangent to
  $\p N$. 

\end{description}
Then
$\msf i (X,U)=\msf i(Y, U)$.
\end{proposition}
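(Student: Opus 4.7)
The plan is to construct a deformation $\{Z^t\}_{t\in[0,1]}$ from $X$ to $Y$ that is nonsingular on $\fr U$, and then invoke Stability Theorem \ref{th:stability}(c). Since $U$ isolates both $X$ and $Y$, neither vanishes on $\fr U\subset\p N$, and the hypothesis $X\wedge_\RR Y=0$ on $\p N$ forces $X_p=\lambda(p)Y_p$ for a continuous, nowhere-zero $\lambda\co\fr U\to\RR$. Split $\fr U=\fr U^+\sqcup\fr U^-$ by the sign of $\lambda$; both pieces are clopen in $\fr U$. On $\fr U^+$, $X$ and $Y$ point the same way, and the straight-line deformation $(1-t)X+tY$ is already nonvanishing, exactly as in the proof of Proposition \ref{th:lindep}. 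The real issue is on $\fr U^-$, where $X$ and $Y$ are antiparallel: a straight line passes through zero, so I need a rotational homotopy from the $X$-direction to the $Y$-direction.

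Fix a Riemannian metric on $M$ and pass to unit sphere bundles. The elementary fact driving both cases is that $-\msf{id}\in SO(k)$ iff $k$ is even (since $\det(-\msf{id})=(-1)^k$), and $SO(k)$ is path-connected, so in that situation there is a fibrewise rotation $R^t$ from $\msf{id}$ to $-\msf{id}$. In case (a), $n:=\dim_\RR M$ is even, so $-\msf{id}\in SO(n)$ acts fibrewise on $S(TM|_{\fr U^-})$, and one sets
\[
  Z^t_p := \bigl((1-t)\|X_p\|+t\|Y_p\|\bigr)\,R^t\bigl(X_p/\|X_p\|\bigr), \qquad p\in\fr U^-,
\]
whose endpoint at $t=1$ is $\|Y_p\|\cdot(-X_p/\|X_p\|)=Y_p$, using antiparallelism on $\fr U^-$. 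In case (b), $n$ is odd, but the tangency hypothesis places $X_p,Y_p\in T_p(\p N)$, a rank-$(n-1)$ subspace; since now $n-1$ is even, the same rotation trick runs inside $S(T\p N|_{\fr U^-})$ with $-\msf{id}\in SO(n-1)$.

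To globalize, combine the rotational homotopy on a collar of $\fr U^-$ with the linear homotopy on a collar of $\fr U^+$ (possible since these clopen sets are disjoint), interpolate through a cutoff into $(1-t)X+tY$ on a slightly thicker collar, and use $(1-t)X+tY$ outside the collar; the latter may vanish in the interior of $U$ but never on $\fr U$. The resulting deformation satisfies $Z^0=X$, $Z^1=Y$, and is nonsingular on $\fr U$, so Stability Theorem \ref{th:stability}(c) yields $\msf i(X,U)=\msf i(Y,U)$. The principal obstacle is the coherent construction of the rotation $R^t$ on $\fr U^-$, which is precisely where hypotheses (a) and (b) enter: each ensures that $-\msf{id}$ lies in the identity component of the relevant rotation group, and the parity mismatch that kills the straight-line argument is absorbed by the additional rotational degree of freedom.
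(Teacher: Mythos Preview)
Your approach is correct and essentially the same as the paper's: both reduce via the Stability Theorem to producing a homotopy through nonvanishing sections over $\partial N$, and both rest on the parity fact that the antipodal map of $\RR^k\setminus\{0\}$ is homotopic to the identity when $k$ is even (taking $k=n$ in case~(a) and $k=n-1$ in case~(b)). The only difference is in the globalization step---the paper invokes triangulability of $\partial N$ and the Homotopy Extension Theorem, whereas you splice the boundary homotopy into the linear one using collars and a cutoff---and these are interchangeable maneuvers.
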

\begin{proof}
By the Stability Theorem \ref{th:stability} it suffices to find a
deformation from $X$ to $Y$ that is nonsingular on $\p N$.  As $\p N$
is a subcomplex of a smooth triangulation of $M$ ({\sc Whitehead}
\cite{Whitehead40}, {\sc Munkres} \cite{Munkres63}), The Homotopy
Extension Theorem ({\sc Steenrod} \cite [Th. 34.9] {Steenrod51}) shows
that this  deformation exists provided $X|\p N$ and $Y|\p N$ are connected by a
homotopy of nonsingular sections of $T_{\p N}M$.  Such a homotopy
exists  in case (a)
 because the antipodal map and identity maps of $\R n\,\verb=\=\,\{0\}$
 are homotopic, and in case (b) because these maps  in
 $\R{n-1}\,\verb=\=\,\{0\}$ are homotopic. 
\end{proof}

Fix $U$ and $N=\ov U$ as in Proposition \ref{th:wedge}, so that $\msf i
(X, U)=\msf i (X, N\verb=\=\p N)$.  An orientation of $N$ corresponds
to a generator
\[
 \nu_N\in H_n (N,\p N)\cong \ZZ.
\]
Let $\nu^N \in H^n (N,\p N)$ be the dual generator. 

Evaluating cocyles on cycles defines the canonical dual pairing (the
Kronecker Index):
\[
   H^n (N, \p N) \times H_n (N,\p
  N)\to \ZZ, \qquad (c, \lam)\mapsto c\cdot\lam.
\]
Let $c_{X,N}\in H^n(N, \p N)$ be
the obstruction to extending $X|\p N$ to a 
 nonsingular vector field on $N$.   
Unwinding definitions proves:
\begin{proposition}             \mylabel{th:obstruction}
If $N$ is oriented, $\msf i (X, U) = c_{X,N}\cdot \nu_N$. \qed
\end{proposition}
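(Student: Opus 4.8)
The plan is to prove the identity $\msf i(X,U) = c_{X,N}\cdot\nu_N$ by reducing both sides to the same computation in a finite model. Since everything earlier guarantees that the index is unchanged under compactly supported deformations nonsingular on $\fr U = \p N$ (Theorem \ref{th:stability}) and additive over disjoint $X$-blocks (Proposition \ref{th:add}), I may first replace $X$ by a vector field $X'$ that has only finitely many zeros in $U$ and agrees with $X$ near $\p N$; by definition $\msf i(X,U) = \msf i^{PH}(X',U) = \sum_{p\in \Z{X'}\cap U} \msf i^{PH}_p(X')$. The obstruction class $c_{X,N}$ likewise depends only on the homotopy class of $X|\p N$ as a nonsingular section of $T_{\p N}M$, so $c_{X,N} = c_{X',N}$.

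First I would recall the definition of the primary obstruction $c_{X',N} \in H^n(N,\p N)$ to extending the nonsingular section $X'|\p N$ over $N$: since $T(M)$ is an $\F n$-bundle and the fibre-minus-zero-section $\F n\sm\{0\}$ is $(n-2)$-connected with $\pi_{n-1}(\F n\sm\{0\})\cong\ZZ$ (real dimension of the fibre being $n$ in the real case, or $2n$ with the complex fibre still giving an $(n-1)$-sphere's worth of obstruction in the complex case — I would be careful here, but the ambient real dimension is what matters for the cellular obstruction theory), the obstruction to extending $X'$ over the $n$-skeleton of a smooth triangulation of $N$ rel $\p N$ is a well-defined cocycle in $C^n(N,\p N;\pi_{n-1}(S^{n-1}))$ whose class is $c_{X',N}$. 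Because $\p N$ is a subcomplex of a smooth triangulation of $N$ (Whitehead, Munkres, as already invoked in Proposition \ref{th:wedge}), I would choose the triangulation fine enough that each $n$-simplex contains at most one zero of $X'$ in its interior and $X'$ is nonsingular on the $(n-1)$-skeleton; then the obstruction cocycle, evaluated on an oriented $n$-simplex $\sigma$ containing a zero $p$, is exactly the local degree of $X'/\|X'\|$ on $\p\sigma$, which is $\msf i^{PH}_p(X')$.

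Next I would evaluate $c_{X',N}\cdot\nu_N$: writing $\nu_N\in H_n(N,\p N)$ as the fundamental cycle $\sum_\sigma \pm\sigma$ compatible with the orientation, the Kronecker pairing gives $c_{X',N}\cdot\nu_N = \sum_\sigma (\text{obstruction cocycle})(\sigma) = \sum_{p\in\Z{X'}\cap U}\msf i^{PH}_p(X')$, where the sign bookkeeping works out because the orientation of each $\sigma$ inherited from $\nu_N$ is precisely the one used in Definition \ref{th:defph} to compute the local Poincaré–Hopf index via the degree on the boundary sphere. Hence $c_{X',N}\cdot\nu_N = \msf i^{PH}(X',U) = \msf i(X,U)$, and since $c_{X,N}=c_{X',N}$ this is the claim. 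The routine verification that the local obstruction at an isolated zero equals the local degree — i.e. that the connecting-homomorphism/clutching description of $\pi_{n-1}$ of the fibre matches the degree normalization — is "unwinding definitions" as stated; I would cite Steenrod \cite{Steenrod51} (obstruction theory for bundle sections) for it rather than reprove it.

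**The main obstacle** I anticipate is purely a matter of normalization and sign conventions: matching the orientation convention implicit in the generator $\nu_N$ and its dual $\nu^N$ with the orientation of $S^{n-1}$ used in the Gauss-map degree of Definition \ref{th:defph}, and confirming that the identification $\pi_{n-1}(T_pM\sm\{0\})\cong\ZZ$ used in building $c_{X,N}$ is the one for which a positively-oriented local zero contributes $+1$. Once the conventions are pinned down consistently this is, as the paper says, just unwinding definitions; there is no real geometric content beyond the standard equivalence of the Poincaré–Hopf index with the primary obstruction to a nonsingular section, which is why stating it as \qed-with-no-proof is appropriate.
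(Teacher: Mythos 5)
Your proposal is correct and is essentially the same argument the paper is alluding to when it says ``unwinding definitions'': pick a representative $X'$ with finitely many zeros and a fine triangulation of $N$ rel $\p N$, observe that the obstruction cocycle evaluated on an $n$-simplex is the local degree of $X'$ there, and pair with the fundamental class to recover $\sum_p \msf i^{PH}_p(X')=\msf i(X,U)$. One minor simplification you could make: Section~\ref{sec:indices} of the paper explicitly stipulates that $M$ is a real $n$-manifold, so the parenthetical worry about the complex fibre is unnecessary here.
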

A similar result holds  for nonorientable manifolds, using homology
with coefficients twisted by the orientation sheaf.

\begin{theorem}               \mylabel{th:approx}
If $X$ can be approximated by vector fields $X'$ with no zeros in $\ov
U$ then $\msf i (X, U)=0$, and the converse holds provided $U$ is connected.
\end{theorem}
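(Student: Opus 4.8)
The forward direction is essentially immediate from the machinery already set up. If $X$ is the limit of vector fields $X'$ having no zeros in $\ov U$, then for $X'$ sufficiently close to $X$ the set $U$ is still isolating for $X'$ and $\msf i(X',U)=\msf i(X,U)$ by the Stability Theorem \ref{th:stability}(b). But $\Z{X'}\cap U=\varnothing$ with $U$ isolating means the empty set is an $X'$-block in $U$, so $\msf i(X',U)=0$ by the $K=\varnothing$ clause in Definition \ref{th:defph} (together with Definition \ref{th:defindex} and Lemma \ref{th:isol}). Hence $\msf i(X,U)=0$.

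For the converse, assume $U$ is connected and $\msf i(X,U)=0$; we must produce nowhere-zero approximations of $X$ on $\ov U$. First, using Definition \ref{th:defindex} and Proposition \ref{th:convex}, replace $X$ by a nearby vector field, agreeing with $X$ off a compact subset of $\Int U$, whose zero set meets $U$ in a finite set $\{p_1,\dots,p_k\}$; this changes neither the index nor the property we are trying to establish, and it suffices to perturb this $X$ to a nowhere-zero field on $\ov U$. Now the obstruction-theoretic description is the natural tool: by Proposition \ref{th:obstruction} (and its twisted-coefficient analogue for the nonorientable case), after shrinking $U$ so that $N=\ov U$ is a compact $C^1$ $n$-manifold with boundary, the class $c_{X,N}\in H^n(N,\p N)$ is the complete obstruction to extending $X|\p N$ over all of $N$ as a nowhere-zero section, and $\msf i(X,U)=0$ says precisely that this top-dimensional obstruction class vanishes. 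Since $N$ has dimension $n$, there are no higher obstructions, so $X|\p N$ extends to a nowhere-zero section $\xi$ of $\tau(M)|N$; a relative homotopy rel $\p N$ from $X|N$ to $\xi$ then gives a deformation of $X$, compactly supported in $\Int U$, to a field with no zeros in $\ov U$, and arbitrarily small such deformations exist by the usual compactness and uniform-continuity argument. Finally, off $U$ the original $X$ is untouched, so we have the desired global approximations.

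The one place requiring genuine care — the main obstacle — is the passage from "the primary obstruction class $c_{X,N}$ vanishes" to "an actual nowhere-zero extension exists." On a manifold with boundary, $c_{X,N}$ is the image under the cellular-to-singular comparison of the obstruction cocycle defined on the top cells of a CW structure on $(N,\p N)$; because $N$ is $n$-dimensional this cocycle lives in the top degree and there is no room for a coboundary adjustment, so one must check that $c_{X,N}=0$ as a \emph{cochain} after possibly modifying $X$ on the $(n-1)$-skeleton, not merely as a cohomology class. The standard fix is to use that the connectivity of $\S{n-1}$ forces the extension over the $(n-1)$-skeleton to exist freely, reducing the whole obstruction to the single top-dimensional class; one then invokes the fact (as in the even/odd-dimensional homotopies used in the proof of Proposition \ref{th:wedge}, and in {\sc Steenrod} \cite{Steenrod51}) that when the summed local degrees vanish the individual zeros can be cancelled in pairs by local modifications supported in disjoint balls in $\Int U$. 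The connectedness of $U$ is exactly what makes such cancellation possible: it lets one move zeros of opposite sign together along paths in $U$. Assembling these local cancellations yields the nowhere-zero approximation, completing the converse.
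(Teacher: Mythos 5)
Your forward direction and the core of your converse match the paper's: both invoke Stability for the easy implication, and both identify Proposition \ref{th:obstruction} as the engine of the converse --- $\msf i(X,U)=c_{X,N}\cdot\nu_N=0$ with $H^n(N,\p N)\cong\ZZ$ (connectedness!) forces $c_{X,N}=0$, so $X|\p N$ extends to a nonsingular section over $N$. The paper then finishes with a cleaner device than yours: it restricts the extension to the bundle $E_\eps$ of nonzero tangent vectors of norm $<\eps$ over a thin tubular $N$ on which $\|X\|<\eps$ already, so the extended section is automatically a nowhere-zero $\eps$-approximation and needs no separate compactness/uniform-continuity step.

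Your second paragraph, however, flags a non-problem. You worry that the obstruction cocycle ``lives in the top degree and there is no room for a coboundary adjustment.'' That is not so: $C^{n-1}(N,\p N)$ is generated by the interior $(n-1)$-cells, which are free to be modified rel $\p N$, and the difference-cochain construction shows precisely that the top obstruction cochain changes by an arbitrary coboundary under such modifications. Thus vanishing of $c_{X,N}$ as a cohomology class (which is exactly what $\msf i(X,U)=0$ gives, via duality on the connected oriented $N$) is already equivalent to the existence of the nonsingular extension; no supplementary pair-cancellation argument is needed. Your fallback route --- move zeros of opposite sign together along paths in the connected $U$ and cancel them locally --- is classically correct and does make the role of connectedness especially vivid, but it is an alternative proof of the same step, not a repair of a gap. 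Finally, your preliminary reduction to finitely many zeros is harmless but unnecessary: $c_{X,N}$ depends only on $X|\p N$, which is already nonsingular, so the obstruction class is defined directly for the given $X$.
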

\begin{proof} If the approximation is possible then the index vanishes
  by  the Stability Theorem \ref{th:stability}.  To prove the
  converse fix a Riemann metric on $M$ and $\eps >0$.  There exists
   an isolating neighborhood  $U'$
  of $K$ whose closure  is a compact submanifold $N\subset U$ and 
\[\|X_p\|<\eps,  \quad (p\in N).
\]
Define
\[E_\eps:=\{x\in\  (N)\co 0< \|x\| <\eps, \quad (p\in N)\}.
\]
This is the total space of a fibre bundle $\eta$ over $N$ that is fibre
homotopically equivalent to the sphere bundle associated to the
tangent bundle of $N$. 

 $X|\p N$ extends to a
section $X''\co N\to E_{\eps}$ of $\eta$, by  Proposition
\ref{th:obstruction}.   Let $X'\in \V (M)$ be the 
extension of $X''$ that agrees with $X$ outside $N$.  Then $X'$ is an
$\eps$-approximation to $X$ with no zeros in $\ov U$.  
\end{proof}

Examination of the proof, together with standard approximation theory,
yields the following addendum to Theorem \ref{th:approx}:
\begin{corollary}               \mylabel{th:approxcor}
Assume $\msf i (X, U)=0$.
\begin{description}

\item[(i)] If $X$ is analytic, the approximations in {\em Theorem
  \ref{th:approx}} can be chosen to be analytic.

\item[(ii)]  If $X$ is $C^r$ and  $0\le r\le \infty$,  the
  approximations can be chosen to be $C^r$ and to agree with $X$ in
  $M\verb=\=U$. \qed

\end{description}
\end{corollary}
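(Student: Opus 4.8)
The plan is to read off both parts from the vector field $X'$ that the proof of Theorem~\ref{th:approx} constructs, and then run one further, standard approximation step: a relative $C^r$ smoothing for (ii), an analytic approximation for (i). First I would recall what that proof produces: given a Riemann metric and $\eps>0$, it picks an isolating neighborhood $U'$ of $K$ whose closure $N$ is a compact submanifold with $N\subset U$ and $\|X_p\|<\eps$ for $p\in N$, and then builds $X'\in\V(M)$ with $\Z{X'}\cap\ov U=\emp$, with $X'=X$ on $M\setminus N$, and with $\|X'-X\|<2\eps$ (both $X|N$ and $X'|N$ take values of norm $<\eps$). Three features will be used: $X'$ is a compact-open approximation to $X$; $X'$ is nonvanishing on the \emph{compact} set $\ov U$, so $\inf_{\ov U}\|X'\|>0$; and the difference $D_0:=X'-X$ is a continuous section of $TM$ supported in the compact set $N$, which lies in the \emph{open} set $U$.

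For (ii), fix $r$ with $0\le r\le\infty$; the case $r=0$ is trivial since $X'$ itself works. I would invoke standard relative smoothing --- mollification in charts, or averaging tangent vectors with a connection after an analytic embedding $M\hookrightarrow\R m$ --- to approximate $D_0$ as closely as desired by a $C^r$ section $D$ whose support is again a compact subset of $U$ (harmless, since $N$ is compact in the open set $U$, so the support can be fattened a little and kept inside $U$). Put $X'':=X+D$. Then $X''$ is $C^r$; it equals $X$ on $M\setminus U$ because $D$ vanishes there; it is $\|D-D_0\|$-close to $X'$, hence free of zeros in $\ov U$ once $D$ is close enough (compactness of $\ov U$ and $\inf_{\ov U}\|X'\|>0$); and it is within $2\eps+\|D-D_0\|$ of $X$. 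Letting $\eps\to 0$ gives (ii).

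For (i), now $X\in\V^\om(M)$, I would keep the same $X'$ and apply the analytic-approximation part of the standard approximation theory used throughout this section to choose $\hat X\in\V^\om(M)$ with $\|\hat X-X'\|$ small uniformly on $\ov U$ (and on any prescribed compact set). Since $\inf_{\ov U}\|X'\|>0$, a close enough $\hat X$ has $\Z{\hat X}\cap\ov U=\emp$, and it approximates $X$ because $X'$ does; hence the approximations in Theorem~\ref{th:approx} can be taken analytic.

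The one step I expect to need a word of comment --- and the reason (i) and (ii) assert different things --- is the contrast between the two smoothing operations. A $C^r$ approximation is local, so in (ii) it may leave $X$ exactly in place on $M\setminus U$ and merely repair it on $N\subset U$; an analytic approximation is global --- one cannot alter a real- or complex-analytic field on one compact piece and hold it fixed on another --- so in (i) I obtain only an analytic \emph{approximation}, the absence of zeros in $\ov U$ surviving purely because that is an open condition on the compact set $\ov U$. Everything else is routine: openness of the nonvanishing condition on a compact set, together with the approximation theorems already used in the proof of Theorem~\ref{th:approx}.
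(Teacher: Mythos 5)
Your argument is correct and in the same spirit as the paper, which asserts the corollary with only the one-line justification ``Examination of the proof, together with standard approximation theory, yields the following addendum'' and leaves all details to the reader. You have filled in exactly those details: identify the $X'$ produced in the proof of Theorem~\ref{th:approx} (agreeing with $X$ off $N\subset U$ and nonvanishing on $\ov U$), then for (ii) mollify the compactly supported difference $X'-X$ to $C^r$ and add it back to $X$, and for (i) apply analytic approximation to $X'$, noting correctly why the ``agrees with $X$ off $U$'' clause cannot survive the analytic case.
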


\begin{definition}              \mylabel{th:deftriv}
Let $\eta$ denote a real or complex vector bundle with total space
$E$ and $n$-dimensional fibres.  
A {\em trivialization} of $\eta$ is a map $\psi\co E \to \F n$ that
restricts to linear isomorphisms on fibres.
\end{definition}

\begin{proposition}               \mylabel{th:obcor}
Assume 
$N\subset U$ is a compact, connected real $n$-manifold whose
interior is isolating for $(X, K)$. 
Let $\psi$ be a trivialization of
$\tau_{\p N}(M)$.  Then $\msf i (X, U)$ equals the degree $\msf
{deg}(F_X)$ of the map
\[
 F_X\co \p N\to \S {n-1}, \quad p\mapsto \frac{\psi
    (X_p)} {\|\psi (X_p)\|}.
\]
\end{proposition}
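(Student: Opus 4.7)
The plan is to reduce to the case of isolated zeros and then apply a degree-theoretic cobordism argument.

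First, by Definition \ref{th:defindex} and Corollary \ref{th:approxcor}(ii), I deform $X$ through a compactly supported deformation in $\Int N$ to a vector field $X'$ having only finitely many zeros in $U$. Since the deformation is fixed outside a compact subset of $\Int N$, it leaves $X$ unchanged on a neighborhood of $\p N$, so $F_{X'} = F_X$; and by Definition \ref{th:defindex} together with the additivity of $\msf i^{PH}$, one has $\msf i(X, U) = \msf i^{PH}(X', U) = \sum_i \msf i^{PH}_{p_i}(X')$, where $p_1, \ldots, p_k$ enumerate the zeros of $X'$ in $U$. Hence it suffices to treat the case that $X$ itself has only these isolated zeros in $U$.

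Next, around each $p_i$ I fix a small closed coordinate ball $D_i \subset \Int N$, with the $D_i$ pairwise disjoint, via charts $\phi_i \co W_i \to W_i' \subset \RR^n$. By Definition \ref{th:defph}, $\msf i^{PH}_{p_i}(X)$ equals the degree of the map $g_i \co \p D_i \to \S{n-1}$ defined by $q \mapsto T\phi_i(X_q)/\|T\phi_i(X_q)\|$. Setting $N_0 := N \sm \bigcup_i \Int D_i$, the field $X$ is nowhere vanishing on $N_0$, and the trivialization $\psi$ on $\p N$ together with the chart trivializations $T\phi_i$ on $\p D_i$ convert $X/\|X\|$ along $\p N_0$ into a map $F \co \p N_0 \to \S{n-1}$ that restricts to $F_X$ on $\p N$ and to $g_i$ on each $\p D_i$. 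Because $X/\|X\|$ extends across $N_0$ as a nowhere-zero section of $\tau(M)|N_0$, a cobordism argument in degree theory, with each $\p D_i$ oriented oppositely to its boundary orientation in $D_i$, would give $\msf{deg}(F_X) = \sum_i \msf{deg}(g_i) = \msf i(X, U)$.

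The main obstacle is that the trivializations $\psi$ on $\p N$ and $T\phi_i$ on the $D_i$ need not jointly extend to a trivialization of $\tau(M)|N_0$, so $X/\|X\|$ cannot be directly regarded as an $\S{n-1}$-valued map on $N_0$. To bypass this I would appeal to Proposition \ref{th:obstruction}: the trivialization $\psi$ identifies the primary obstruction $c_{X, N} \in H^n(N, \p N)$ to extending the unit section $X/\|X\|$ from $\p N$ inward with the homotopy class of $F_X \co \p N \to \S{n-1}$, whose Kronecker pairing against $\nu_N$ is exactly $\msf{deg}(F_X)$; combined with $\msf i(X, U) = c_{X, N} \cdot \nu_N$, this yields the claim. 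The nonorientable case is handled by twisting coefficients by the orientation sheaf, as noted after Proposition \ref{th:obstruction}.
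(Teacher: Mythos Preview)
Your final paragraph is exactly the paper's proof: invoke Proposition~\ref{th:obstruction} to get $\msf i(X,U)=c_{X,N}\cdot\nu_N$, and then identify $c_{X,N}\cdot\nu_N$ with $\msf{deg}(F_X)$ via obstruction theory (the trivialization $\psi$ identifies the homotopy class of $F_X$ with the obstruction class). That is the whole argument.

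The first two paragraphs are an unnecessary detour. You set up a cobordism/excision argument, correctly diagnose that it stalls because $\psi$ and the chart trivializations $T\phi_i$ need not patch to a trivialization of $\tau(M)|N_0$, and then abandon it in favor of the obstruction-theory route. You could simply delete those paragraphs. (Also, the citation of Corollary~\ref{th:approxcor}(ii) is off: that corollary assumes $\msf i(X,U)=0$. The existence of an $X'$ with finitely many zeros and a deformation compactly supported in $\Int U$ is already built into Definition~\ref{th:defindex}, via Proposition~\ref{th:convex}(iii).)
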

\begin{proof} Follows from Proposition \ref{th:obstruction}, because
  $\msf {deg}(F_X)=c_{X,N}\cdot \nu_N$ by obstruction theory. 
\end{proof}

This result will be used in the proofs Theorem \ref{th:MAIN}:
\begin{proposition}         \mylabel{th:fue}
Let $W\subset M$ be a connected isolating neighbrohood for $(X,
K)$.  Assume the following data:
\begin{itemize}
\item $\Phi\co T(W\verb=\=K)\to \R n$ is a trivialization of
  $\tau (W\verb=\= K)$,

\item $E\subset \R {n\times n}$\, is a linear space of
  matrices,\, $\dim (E) < n$,

\item$A\co W\sm K\to E$ is a map such that
\begin{equation}                \label{eq:xpap}
\Phi(X_q) = A(q)\cdot\Phi(Y_q), \qquad (q\in W\sm K).
\end{equation}
\end{itemize}
If $W$ is isolating for $Y\in \V (M)$, then
 \  
$
 \msf i (Y, W)=0\implies \msf i (X, W)=0$.
\end{proposition}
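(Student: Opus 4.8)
The plan is to reduce the vanishing of $\msf i(X,W)$ to a degree computation on the boundary of a suitable submanifold, and then to exploit the hypothesis $\dim E < n$ to show that the relevant map is not surjective onto $\S{n-1}$, hence null homotopic. First I would choose, as in the proof of Proposition \ref{th:obcor}, a compact connected $n$-submanifold $N\subset W$ whose interior is isolating for $(X,K)$, so that $\p N\subset W\sm K$ and $\msf i(X,W)=\msf i(X,\Int N)$, and similarly $\msf i(Y,W)=\msf i(Y,\Int N)$. Since $\p N$ lies in $W\sm K$, the trivialization $\Phi$ restricts to a trivialization $\psi$ of $\tau_{\p N}(M)$, and Proposition \ref{th:obcor} identifies both indices with degrees of the Gauss-type maps
\[
 F_X\co \p N\to\S{n-1},\quad p\mapsto \frac{\Phi(X_p)}{\|\Phi(X_p)\|},
\qquad
 F_Y\co \p N\to\S{n-1},\quad p\mapsto \frac{\Phi(Y_p)}{\|\Phi(Y_p)\|}.
\]
The assumption $\msf i(Y,W)=0$ together with Proposition \ref{th:obcor} (and the homotopy classification of maths into spheres, using connectedness of $\p N$ when $n\ge 2$) says that $F_Y$ is null homotopic.

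Next I would use Equation (\ref{eq:xpap}): on $\p N\subset W\sm K$ we have $\Phi(X_p)=A(p)\cdot\Phi(Y_p)$ with $A(p)\in E$. Consider the auxiliary map
\[
 G\co \p N\to \R n,\qquad p\mapsto A(p)\cdot\Phi(Y_p).
\]
This is nonvanishing on $\p N$ (because $X$ has no zeros there), so $F_X=G/\|G\|$. Now factor $G$ through the evaluation map $\mathrm{ev}\co E\times\R n\to\R n$, $(B,v)\mapsto B v$, via $p\mapsto (A(p),\Phi(Y_p))$. The image of $G$ therefore lies in the set $\Sigma:=\{Bv\co B\in E,\ v\in\R n\}$. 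Since $E$ is a linear space of $n\times n$ matrices with $\dim E<n$, the set $\Sigma$ is the image of the bilinear map $\mathrm{ev}$ restricted to $E\times\R n$; its image, after projectivizing in the $\R n$ factor, is the image of a map from $E\times\S{n-1}$ (a manifold of dimension $\dim E + n - 1 < 2n-1$)... this dimension count is not by itself enough, so instead I would argue directly: $\Sigma$ is a cone that is a countable union of linear subspaces of dimension $\le \dim E < n$ — indeed for each fixed $v$, $\{Bv\co B\in E\}$ is a linear subspace of dimension $\le \dim E$, and $\Sigma=\bigcup_v E\cdot v$. More usefully, $\mathrm{ev}|_{E\times\R n}$ is a polynomial (bilinear) map from a space of dimension $\dim E+n<2n$, so by Sard's theorem its image has measure zero in $\R n$, hence $\Sigma\ne\R n$ and in particular $\Sigma\cap\S{n-1}\ne\S{n-1}$. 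Therefore $F_X$ maps $\p N$ into a proper subset of $\S{n-1}$, so $F_X$ is null homotopic and $\msf {deg}(F_X)=0$, i.e. $\msf i(X,W)=0$.

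The main obstacle I anticipate is making the "image is not all of $\S{n-1}$" step airtight and, relatedly, handling the low-dimensional case $n=1$ (where $\dim E<1$ forces $E=0$, so $X\equiv 0$ on $W\sm K$ and the claim is immediate) and the case $n=2$ carefully, since there connectedness of $\p N$ and the precise statement "null homotopic $\Rightarrow$ degree zero" must be invoked correctly; this is exactly where the hypothesis $\dim_\FF M = 2$ of the main theorems will make $n=2$ the relevant instance. A secondary point to get right is that the factorization of $F_X$ through $\Sigma$ genuinely uses only values of $A$ and $\Phi(Y)$ on $\p N$, where everything is defined and continuous, so no extension of $A$ across $K$ is needed. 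Once the dimension bound $\dim E<n$ is seen to force the spherical Gauss map $F_X$ to miss a point of $\S{n-1}$, the conclusion follows formally from Proposition \ref{th:obcor} and the homotopy invariance of degree.
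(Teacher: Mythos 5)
Your setup matches the paper's: you pick a compact connected $n$-submanifold $N\subset W$ with $\p N\subset W\sm K$, restrict $\Phi$ to a trivialization over $\p N$, and reduce via Proposition~\ref{th:obcor} to showing that the Gauss map $F_X\co\p N\to\S{n-1}$ is null homotopic given that $F_Y$ is. The divergence, and the gap, is in the step where you try to show $F_X$ misses a point of $\S{n-1}$.

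Your claim that $\Sigma:=\{Bv\co B\in E,\ v\in\R n\}$ is a proper subset of $\R n$ is false. Take $E=\RR\cdot I$, a $1$-dimensional space of $n\times n$ matrices with $\dim E=1<n$: then $\Sigma=\R n$. The same thing happens in the paper's actual application, where $E=H(\CC)\subset\R{4\times 4}$ has $\dim E=2<4$ but every nonzero $B\in E$ is invertible, so again $\Sigma=\R 4$. The invocation of Sard's theorem does not repair this: Sard controls the measure of the set of \emph{critical} values, and a bilinear map $E\times\R n\to\R n$ from a domain of dimension $\dim E+n\ge n+1$ can perfectly well be a submersion on an open set and hence surjective. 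So "image has measure zero because $\dim E+n<2n$" is not a valid inference; the relevant comparison is $\dim E+n$ versus $n$, and that inequality goes the wrong way.

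The missing idea, which the paper's proof supplies, is to \emph{first} use $\msf i(Y,W)=0$ to homotope $F_Y$ to a constant map with value $c\in\S{n-1}$, and only then push $F_X$ through $A$. After that reduction, the relevant set is not $\bigcup_v E\cdot v$ but the single linear subspace $E\cdot c=\{Bc\co B\in E\}$, which genuinely has dimension at most $\dim E<n$. Concretely, the paper shows $F_X$ is homotopic to a map with image in $H(\Sig_E)$, where $\Sig_E$ is the unit sphere of $E$ and $H(B)=Bc/\|Bc\|$; since $H$ is Lipschitz and $\dim\Sig_E\le n-2$, the image has Hausdorff dimension $\le n-2$ and so cannot be all of $\S{n-1}$. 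Your argument never fixes the second slot $v$, which is exactly what the hypothesis $\dim E<n$ requires in order to give a dimension drop. To salvage your approach you would need to reinstate this homotopy of $F_Y$ to a constant before applying $A$; once you do that, the remainder of your degree-theoretic reasoning goes through as intended.
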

\begin{proof} 
Consider the maps 
\[\begin{split}
 & F_X\co \p N\to \S {n-1}, \quad p\mapsto \frac{\Phi
    (X_p)} {\|\Phi (X_p)\|}, \\
 & F_Y\co \p N\to \S {n-1}, \quad p\mapsto \frac{\Phi
    (Y_p)} {\|\Phi (Y_p)\|}.
\end{split}
\]
 Corollary \ref{th:obcor} implies $\msf {deg} (F_Y)=0$, hence 
$F_Y$ is null homotopic, and it suffices to prove $F_X$ null
 homotopic.  
Degree theory shows that  $F_Y$ is  homotopic to a constant map
\[\tilde F_Y\co\p N\to\S{n-1}, \quad \tilde F_Y(p)= c\in\S{n-1}.
\]
By Equation  (\ref{eq:xpap}) there exists $\lam\co \p
N\to \RR$ such that 
\[
   F_X (p) = \lam (p) A (p) F_Y(p), \quad \lam (p) >0.
\]
Consequently $F_X$ is homotopic to 
\[
 \tilde F_X\co \p N\to \S{n-1}, \qquad \tilde F_X(p)= \lam (p)A
(p)c.
 \]
 The map
\[
   H\co  E\verb=\= \{0\} \to \S{n-1}, \quad B\mapsto\frac{B(c)}{\|B(c)\|} 
\] 
satisfies:
\begin{equation}                \label{eq:fxpn}
\tilde F_X (\p N) \subset H ( E\verb=\=\{0\})\subset\S {n-1}.
\end{equation}
Since the unit sphere $\Sig \subset E\verb=\=\{0\}$ is a deformation
retract of $ E\verb=\= \{0\}$, Equation (\ref{eq:fxpn}) shows that
$\tilde F_X $ is homotopic to a map
\[
G\co \p N \to H (\Sig) \subset\S {n-1}.
\]
Now $\dim (\Sig)= \dim (E)-1 \le n-2$. As $H$ is Lipschitz, $\dim (H
(\Sig)) \le n-2$. Therefore $H (\Sig)$ is a proper subset of
$\Sig^{n-1}$ containing $G(\p N)$, implying $G$ is null homotopic.
The conclusion follows because the homotopic  maps
\[F_X, \tilde F_X, G\co \p N\to\S{n-1}
\]
 have the same degree. 
\end{proof}
\begin{example*}[\bf C]         \mylabel{th:exalg}
Let $\mcal A$ denote a finite dimensional algebra over $\RR$ with multiplication \ $ (a,b)\mapsto a\bullet b$.
Let $X, Y$ be vector fields on a connected open set $U\subset
\mcal A$, whose respective zero sets $K, L$ are compact.
Assume there is a  map $A\co U\to \mcal A$ such that
\[
 X_p=A(p)\bullet Y_p, \qquad (p\in U). 
\]
Then\,  $\msf i_Y (U) = 0 \implies \msf
i_X (U)= 0$, by   Proposition  \ref{th:fue}. 
\end{example*}

\section{Proofs of Theorems  \ref{th:MAIN},  \ref{th:liealg},
 \ref{th:liegroup}}
\mylabel{sec:proofs} 
Henceforth $M$ denotes a connected real or complex 2-manifold with
$\p M=\emp$.

\subsection {Proof of Theorem \ref{th:MAIN}} 
The  hypotheses are:
\begin{itemize}

\item $X$ and $Y$ are analytic vector fields on  $M$,

\item $Y$ tracks $X$,

\item $K$ is an essential $X$-block, 

\end{itemize}
The conclusion is that $\Z Y\cap K\ne\varnothing$.  It suffices to
prove:
\begin{quote}{\em
  $\Z Y$ meets every neighborhood of $K$.}
\end{quote}

 Many sets
$S\subset M$ associated to analytic vector fields, including
 zero sets and dependency sets,  are {\em analytic
  spaces}:  Each point of $S$ has an open
neighborhood $V\subset M$ such that $S\cap V$ is the zero set of an
analytic map $V\to \FF^k$.  This implies $S$ is covered by a locally
finite family of disjoint analytic submanifolds. 

The local topology of analytic
  spaces of  is rather simple, owing to the
 theorem of {\sc {\L}ojasiewicz} \cite {Lo64}:
\begin{theorem} [{\sc Triangulation}]        \mylabel{th:triang}
If  $S$ is a locally finite collection of closed analytic spaces in $M$,
there is a triangulation of $M$ such that each  element of $ S$
a  subcomplex. 
\end{theorem}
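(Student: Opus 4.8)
The plan is to reduce to {\L}ojasiewicz's classical triangulation theorem for semianalytic subsets of Euclidean space and then globalize by patching. First observe that triangulability is unaffected if we enlarge $S$ to any locally finite refinement, so I would replace $S$ by the family of strata of a \emph{compatible analytic stratification} of $M$: a locally finite partition of $M$ into connected analytic submanifolds such that the frontier $\fr{\Sigma}$ of each stratum $\Sigma$ is a union of strata of strictly lower dimension and each original element of $S$ is a union of strata. Such a stratification exists by the structure theory of analytic (hence semianalytic) sets — use the fact, noted above, that each element of $S$ is covered by a locally finite family of disjoint analytic submanifolds; one works in a coordinate chart (regarding a complex $M$ as a real manifold of twice the dimension, so that complex-analytic zero sets become real-analytic ones) and proceeds by induction on $\dim M$ using the Weierstrass preparation theorem and generic linear projections to separate the finitely many sheets of each set over a lower-dimensional base.

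Second, I would build the triangulation by a double induction — the outer one on $\dim M$, the inner one on the dimension of the strata. For the inner step, assume the union $M_k$ of strata of dimension $\le k$ has been triangulated by a locally finite simplicial complex in which every stratum of dimension $\le k$ is a subcomplex. Given a stratum $\Sigma$ of dimension $k+1$ and a point $p\in\ov{\Sigma}$, the essential geometric input is the \emph{local conic structure}: a small neighborhood $V_p$ of $p$ together with a semianalytic homeomorphism of $V_p$ onto the cone over its ``link'' $L_p:=\{\dist(\cdot,p)=\eps\}\cap V_p$, carrying $V_p\cap\Sigma'$ into the cone over $L_p\cap\Sigma'$ for every stratum $\Sigma'$. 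The link $L_p$ is a compact analytic space of dimension $<\dim M$, so by the outer induction it is triangulated compatibly with its induced stratification; coning that triangulation to the cone point $p$ triangulates $V_p$ so as to agree on $V_p\cap M_k$ with the given triangulation of $M_k$. Covering $\ov{\Sigma}$ by countably many such $V_p$, and reconciling across shared faces the finitely many cone-triangulations that meet any compact set by the usual inductive adjustment, one extends the triangulation from $M_k$ to $M_{k+1}$; after (locally finitely) many steps this triangulates $M$ itself, with each stratum — hence each element of $S$ — a subcomplex.

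The main obstacle, and the heart of {\L}ojasiewicz's argument, is producing the local conic structure with enough regularity to make the patching legitimate. One needs $\dist(\cdot,p)$, or a suitable semianalytic proxy built from a generic projection, to have fibres meeting $\ov{\Sigma}$ and every lower stratum in a uniformly transverse fashion near $p$, so that integrating a gradient-like vector field for this function produces the radial cone homeomorphism, and that homeomorphism must moreover extend continuously to $p$. Both the uniform transversality and the continuous extension rest on the \emph{{\L}ojasiewicz inequality}: a bound $\|\nabla f(x)\|\ge c\,|f(x)|^{\theta}$ valid near the zero set of an analytic function $f$, which prevents the gradient flow from stalling and forces its trajectories to converge to $p$; the companion curve-selection lemma handles the behaviour of strata in a punctured neighborhood of $p$. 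Granting these analytic-geometry tools, the inductive assembly into a single global triangulation, and the verification that the resulting map $M\to|K|$ is a homeomorphism sending each element of $S$ to a subcomplex, is the bookkeeping carried out in the original paper, which I invoke as \cite{Lo64}.
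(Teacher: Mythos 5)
The paper does not prove this theorem: it is stated as a known result and simply attributed to {\L}ojasiewicz \cite{Lo64}, with no argument given. Your proposal therefore cannot be matched against a paper proof; what it does is sketch the standard strategy behind {\L}ojasiewicz's own proof (stratify, establish local conic structure, triangulate links by induction on dimension and cone off), and in the end you yourself invoke \cite{Lo64} for the ``bookkeeping,'' so you arrive at the same citation the paper uses.

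Two cautions about the sketch itself. First, there is a category slip: the link $L_p$ is not a compact analytic space but a compact \emph{semianalytic} subset of a small sphere, and the strata of the stratification are semianalytic submanifolds rather than analytic ones. The outer induction has to be carried out for semianalytic families in arbitrary real-analytic manifolds (with complex $M$ viewed as a real $4$-manifold, as you note), not merely for collections of closed analytic spaces; {\L}ojasiewicz's theorem is stated and proved in exactly that larger generality, and restricting to analytic spaces from the start would break the induction at the link step. Second, the passage ``reconciling across shared faces the finitely many cone-triangulations \dots by the usual inductive adjustment'' compresses what is in fact the hardest part of the argument. Overlapping local cone homeomorphisms centered at different points need not agree, and triangulations obtained by coning the two links need not fit together along the overlap. {\L}ojasiewicz's construction avoids this by building a single global cell decomposition via controlled (generic) linear projections, rather than by covering with independently-built cone neighborhoods and patching afterward; describing the patching as ``bookkeeping'' understates a genuine difficulty. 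As a guide to why the theorem is true the sketch is serviceable, but as a proof it has the same logical status as the paper's one-line citation.
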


 We justify three simplifying assumptions by showing that if any one
 of them is violated the conclusion of Theorem \ref{th:MAIN} holds:

\begin{description}

\item [(A1)] {\em $K$ is connected, and $\chi (K)=0$.} 

$K$ is compact and triangulable and hence has only finitely many
components.  As $K$ is an essential $X$-block, so is some component
(Proposition \ref{th:add}), and we can assume $K$ is that component.
If $\chi (K)\ne 0$, the flow induced by $Y$ on the triangulable space
$K$ fixes a point $p\in \Z Y\cap K$ by Lefschetz's Fixed Point Theorem
({\sc Lefschetz} \cite {Lefschetz37}, {\sc Spanier} \cite{Spanier66},
{\sc Dold} \cite {Dold72}).  This justifies (A1).

Note that (A1) implies $K$ has arbitrarily small connected
neighborhoods $U$ that are isolating for $(X,K)$. 

\item [(A2)] {\em  $\dim_\FF (K)=1$ and $K$ is an analytic submanifold.} 

If $\dim_\FF (K)=0$ then $K$ is a singleton by (A1) and the conclusion
of the theorem is
obvious.  

If $\dim_\FF(K)=2$ then $K=M$ because $X$ is analytic, $M$ is
connected, and both are 2-dimensional.  Therefore $\chi (M) \ne 0$ and
the Poincar\'e-Hopf Theorem  \ref{th:PH}
implies $\Z Y\cap K =\Z Y\ne\varnothing$. 

Let $\dim (K)=1$.  Suppose $K$ is not an analytic submanifold.  Its
singular  set is  nonempty,  finite and $Y$-invariant, and thus contained in $\Z Y\cap K$.

\item [(A3)] {\em $U$ is a connected isolating neighborhood for $(X,
  K)$ and $\Z Y \cap \ov U\subset K$.}

If no such $U$ exists, (A1) implies there is a nested sequence 
  $\{U_j\}$ of connected isolating neighborhoods for
  $(X, K)$ whose intersection is $K$,  and  each $U_j$
  contains a $p_j\in \Z Y\cap\ov {U_j}$.    A subsequence of $\{p_j\}$
  tends to a point of $\Z Y \cap K$ by compactness of $K$.

Note that (A3) implies  $U$ is isolating for $Y$.

\end{description}
Henceforth we assume  (A1), (A2) and (A3).

It suffices to prove
\begin{equation}                \label{eq:iyu0}
\msf i (Y, U)\ne 0,
\end{equation}
because then  (A3) implies $\Z Y\cap U$  meets
$K$. 

Both $K$ and the dependency set $\msf D:=\msf{Dep}_\FF (X, Y)$
(Definition \ref{th:defdepend}) are $Y$-invariant analytic spaces
(Proposition \ref{th:ideal}), and (A2) implies $\dim_\FF (\msf D) = 1$
or $2$.
Because $\dim_\FF(K)=1$ by (A3), one of the following conditions is satisfied:
\begin{description}

\item[(B1)] {\em $K$ is a component of $\msf D$},

\item[(B2)] {\em $\dim_\FF(\msf D)=1$ and $K$ is not a component of $\msf D$},

\item[(B3)] {\em $\dim_\FF(\msf D)=2$}.

\end{description}

{\em Assume} (B1): Choose the isolating neighborhood $U$ so small that
$\fr U \cap \msf D=\emp$.  Proposition \ref{th:lindep} implies $i (Y,
U)=\msf i (X, U) \ne 0$, yielding (\ref{eq:iyu0}).

\smallskip
{\em Assume} (B2):  
Because $\msf D$ and $K$ are 1-dimensional and $K\subset \msf D$, the frontier
in $K$ of $K\cap \big(\ov{\msf D\sm K}\big)$ is   $Y$-invariant and
$0$-dimensional, hence a nonempty subset of $\Z Y\cap K$.
 
\smallskip
{\em Assume} (B3): In this case  $\msf D=M$  because $X$ and $Y$
are analytic and $M$ is connected, hence $X\wedge_\FF Y=0$.

If $M$ is real, Proposition \ref{th:wedge}(a) implies  
\[
  \msf i (Y, U)=\msf i (X, U) \ne 0,
\]
whence $\Z Y\cap U\ne\varnothing$ and $\Z Y\cap K\ne\varnothing$ by
(A3).

This completes the proof of  Theorem \ref{th:MAIN} for real $M$.

Henceforth we assume $M$ is complex.  Therefore 
 (A1) and  (A2) imply
\begin{description}
\item[(C1)] {\em $K$ is a compact connected Riemann surface  of genus $1$,
holomorphically embedded in $U$,}

\item[(C2)] {\em The tangent bundle $\tau (K)$ is a holomorphically trivial
  complex line bundle,}
\end{description}

Note that  (B3) implies $ X_p$ and $Y_p$ are linearly dependent over $\CC$ at
all $p\in M$, because $X$ and $Y$ are analytic and $M$ is connected.
Together with (A3) this implies:

\begin{description}
\item[(C3)] {\em There is an open neighborhood $W\subset U$ of $K$ and a 
  holomorphic map $f\co W\to\CC$ satisfying:}
\[p\in W \implies X_p =f(p)Y_p, \qquad f^{-1} (0)= K.
\]  
\end{description}

Since $K$ is a compact, connected, complex submanifold of $M$ having
codimension $1$, it can be viewed as a divisor of the complex analytic
variety $M$ ({\sc Griffiths \& Harris} \cite{GrifHarris78}).  This
divisor determines a holomorphic line bundle $[K]$ over $M$,
canonically associated to the pair $(M, K)$.
\begin{proposition}             \mylabel{th:div}
{~}
\begin{description}

\item[(i)]  The restriction  of $[K]$  to the
  submanifold  $K$ is holomorphically isomorphic to the
algebraic normal bundle $\nu (K, M):=\tau_K(M)/\tau (K)$ of $K$.

\item[(ii)]  $[K]$  is  holomorphically trivial. 

\item[(iii)]   $\nu(K, M)$ is  holomorphically trivial. 

\end{description}
\end{proposition}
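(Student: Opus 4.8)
The plan is to prove the three parts of Proposition~\ref{th:div} in order, using the holomorphic line bundle $[K]$ attached to the divisor $K$ together with the facts already assembled in (C1)--(C3).

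\textbf{Part (i).} This is the standard adjunction-type identification of the line bundle of a smooth divisor with its normal bundle, and I would simply quote it from Griffiths--Harris \cite{GrifHarris78}: for a smooth hypersurface $K\subset M$, the restriction $[K]\big|_K$ is holomorphically isomorphic to the normal bundle $\nu(K,M)=\tau_K(M)/\tau(K)$. Concretely, if $K$ is cut out locally by holomorphic functions $g_\alpha$ on $U_\alpha$ with transition functions $g_\alpha=h_{\alpha\beta}g_\beta$, then $\{h_{\alpha\beta}\}$ is a cocycle defining $[K]$, and the differentials $dg_\alpha$ patch (along $K$) to a nowhere-zero section of $\nu(K,M)^*\otimes[K]\big|_K$, giving the isomorphism. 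No originality is needed here.

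\textbf{Part (ii).} I would produce a global nowhere-zero holomorphic section of $[K]$ directly from (C3). Recall $[K]$ has, by construction, a canonical holomorphic section $s$ whose zero divisor is exactly $K$; near $K$, in local coordinates where $K=\{g_\alpha=0\}$, this section is represented by the functions $g_\alpha$. The function $f$ of (C3), defined on the neighborhood $W$ of $K$ with $f^{-1}(0)=K$ and $K$ reduced, vanishes to first order along $K$, so $f$ and the $g_\alpha$ differ by nonvanishing holomorphic factors; hence on $W$ the section $s$ can be trivialized, i.e. $[K]\big|_W$ is holomorphically trivial. On $M\setminus K$ the section $s$ is nowhere zero, so $[K]\big|_{M\setminus K}$ is also trivial. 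Patching these two trivializations over $W\setminus K$ shows $[K]$ is globally trivial: more precisely, $[K]$ is trivial iff $s$ can be divided to a global nowhere-zero section, and $f$ (extended by $1$, or rather by $s$ itself, off $W$) does exactly this. So the divisor $K$, although nontrivial as a set, is linearly equivalent to zero because it is the zero set of the globally defined object $X\wedge_\CC Y$ / $f$.

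\textbf{Part (iii).} Combine (i) and (ii): $\nu(K,M)\cong[K]\big|_K$, and the restriction of a holomorphically trivial bundle is holomorphically trivial, so $\nu(K,M)$ is holomorphically trivial. Alternatively, and perhaps more transparently, one can argue directly that $df$ restricted to $K$ is a nowhere-zero holomorphic section of $\nu(K,M)^*$ — since $K=f^{-1}(0)$ is smooth and reduced, $df$ does not vanish along $K$ and annihilates $\tau(K)$, hence descends to a nowhere-zero section of the conormal bundle $\nu(K,M)^* $ — and then dualize.

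The main obstacle is Part~(ii): one must be careful that the ``tautological'' section $s$ of $[K]$ really is globally defined and that $f$ genuinely trivializes $[K]$ over \emph{all} of $M$, not merely over $W$. The subtlety is that $f$ is only given on the neighborhood $W$, so the trivialization has to be built by gluing the $W$-trivialization (coming from $f$) to the obvious trivialization of $[K]$ over $M\setminus K$ (coming from $s$ being zero-free there) across the overlap $W\setminus K$ — and one needs $f/s$ to extend to a global nowhere-zero holomorphic function on $M\setminus K$, equivalently that the transition cocycle is a coboundary. This is where reducedness of $K$ (the ``$f^{-1}(0)=K$'' in (C3), read as an equality of reduced analytic sets with $f$ vanishing to exactly first order) is essential. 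Everything else is bookkeeping with divisors and line bundles from \cite{GrifHarris78}.
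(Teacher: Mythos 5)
Your treatment of (i) is the same standard quotation from Griffiths--Harris that the paper uses, and your \emph{direct} argument for (iii) — that $df|_K$ annihilates $\tau(K)$ and is nowhere zero, hence descends to a trivializing section of the conormal bundle — is correct, clean, and is actually the most robust route to the fact that gets used downstream (triviality of $\nu(K,M)$, hence of $\tau_K(M)$).

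The gap is in your argument for (ii). The gluing you describe produces the bundle $[K]$ as the bundle with clutching cocycle $f|_{W\setminus K}$ on the cover $\{W,\, M\setminus K\}$, and, as you yourself observe at the end, triviality of $[K]$ over $M$ then amounts to that cocycle being a coboundary: $f|_{W\setminus K}=h_W/h_{M\setminus K}$ with $h_W\in\mathcal O^*(W)$, $h_{M\setminus K}\in\mathcal O^*(M\setminus K)$. You then assert this follows from reducedness of $K$, but reducedness is an orthogonal matter — it ensures that $f$ vanishes to exactly first order along $K$ (equivalently that $\mathcal K=(f)$ on $W$, and that $df|_K\ne 0$ in your proof of (iii)), and says nothing about whether the clutching cocycle bounds. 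The parenthetical ``$f$ extended by $1$, or rather by $s$ itself, off $W$'' does not define a holomorphic function on $M$, and the closing remark about ``the globally defined object $X\wedge_\CC Y/f$'' cannot be right: by (B3) one has $X\wedge_\CC Y\equiv 0$, so that expression is not defined. In short, no global nowhere-zero section of $[K]$ over $M$ has been produced.

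What the paper's proof (the Eisenbud footnote) actually establishes is that $[K]$ is trivial \emph{over $W$}: on $W$ the ideal sheaf $\mathcal K$ is principal, $\mathcal K=(f)\cong\mathcal O_W$, so $\mathcal K^*$ is free and $[K]|_W$ is trivial. Since $K\subset W$, restricting and applying (i) gives (iii); nothing about $[K]$ over $M\setminus W$ enters, and the later use (obtaining (C4)) only requires $\nu(K,M)$ trivial. So the safe reading of (ii) is $[K]|_W$ trivial rather than $[K]$ trivial over all of $M$, and your proposal should either prove that weaker statement (which your argument does, correctly, once you drop the gluing step) or bypass (ii) entirely and go straight to (iii) via $df|_K$ as in your alternative.
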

\begin{proof}
 Working through the definition of $[K]$ in 
 \cite{GrifHarris78} demonstrates (i).  Part (ii) follows from (C3)
 and the italicized statement on \cite[page  134]{GrifHarris78}), and (ii)
 implies  (iii).\footnote{
An elegant explanation was kindly supplied by  
{\sc D. Eisenbud} \cite{Eisenbud15}:  
 The ideal defining $[K]$ is the dual $\mcal K^*$ of the sheaf
$\mcal K$ of ideals of the analytic space  $K$.
 Because $\mcal K$  is generated by the single
function $f$ it is a product sheaf, and so also is $\mcal K^*$.  Therefore  
 $[K]$ is a  holomorphically trivial line bundle, as is its restriction to
$K$, which is $\nu(K, M)$.} 
\end{proof}

From  (C1), (C2) and Proposition \ref{th:div}(ii) with
$V:=K$ we see that  the
complex vector bundle $\tau_K(M)\cong \tau (K) \oplus \nu (K, M)$ is
holomorphically trivial. 
  As $K$ is triangulable we can choose $W$ in (C3) so that it admits
  $K$ as a deformation retract.  Therefore:
\begin{description}
\item[(C4)]
{\em $\tau (W)$ is a trivial complex vector bundle}
\end{description}
by  the Homotopy Extension Theorem ({\sc Steenrod}
  \cite[Thm. 34.9] {Steenrod51}, {\sc Hirsch} \cite [Chap. 4,
    Thm. 1.5] {Hirsch76}).

Define  
\[
 \theta\co\CC\to\R {2\times 2}, \quad  a+b\sqrt{-1} \mapsto
 \, \left[\begin{smallmatrix} 
   a & -b \\ b & \, a
   \end{smallmatrix}\right], \qquad (a, b\in \RR). 
\]
Let
\[\Theta\co \C {2\times 2} \to \R {4 \times 4}
\]
be the $\RR$-linear isomorphism that replaces each matrix entry \,$z$\,
 by the $2\times 2$ block $\theta(z)$.

Define
\[
 H\co \CC \to \R {4\times 4},  \quad a+b\sqrt{-1} \mapsto
\left[\begin{smallmatrix} 
   a & -b & 0 & 0 \\
   b  & \, a & 0 & 0 \\
   0 & 0  & a & -b\\
   0 & 0  & b & \, a 
  \end{smallmatrix}\right], \quad a, b \in \RR.
\]
Note that $E:=H (\CC)$ is a 2-dimensional linear subspace of $\R
{4\times 4}$. 

Let  
\[\Psi\co T (W)\to\C 2 
\]
be a trivialization of the complex vector bundle $\tau (W)$
(Definition \ref{th:deftriv}).  The real vector bundle $\tau (W^\RR)$
has the trivialization
\[
  \Phi:= \Theta\circ \Psi\co T (W)\to \R 4. 
\]

Let $f\co W \to \CC$ be as in (C3) and set
$ A:= H\circ f\co W \to E$. 
Then
\[
 \Phi (X_q)=A(q)\cdot\Phi (Y_q), \qquad (q\in W).
\]
This implies $\msf i (Y, W) =\msf i (X, W)\ne 0$  by  Proposition \ref{th:fue}, 
because $\msf i (X,W)\ne 0$,  Therefore (\ref{eq:iyu0}) holds,
completing the proof of Theorem \ref{th:MAIN}.  

\subsection{Proof of Theorem \ref{th:liealg} }   \mylabel{sec:liealg}
Recall the hypotheses: 
\begin{itemize}
\item $M$ is a connected real or complex 2-manifold with empty boundary
\item
$\mcal G\subset \V^\om (M)$ is a Lie algebra over ground field $\FF$
  that tracks $X\in \V^\om   (M)$
\item If $M$ is real, $\mcal G$ is supersolvable. 
\item $K$ is an essential $X$-block 
\end{itemize}
To be proved:  $\Z{\mcal G}\cap K\ne\varnothing$.

Consider first the case that $M$ is complex.  We can assume: 

\begin{description}

\item[(A1$'$)] {\em $K$ is connected and $\chi (K)=0$.}

For otherwise the argument used above to justify (A1) shows that there
is point of $K$ fixed by the local flow of every $Y\in \V^1 (M)$
that tracks $X$. 

\end{description}

\begin{description}

\item [(A2$'$)] {\em  $\dim_\FF (K)=1$, 
and  $K$ is an analytic submanifold.}

If $\dim_\FF (K)=0$ then $K$ is a singleton by (A1$'$) and the conclusion
of the theorem is 
obvious.  
If $\dim_\FF(K)=2$ then $K=M$ because $X$ is analytic and $M$ is
connected.  But then $X=0$, contradicting $\msf i_K
(X)\ne\varnothing$. 
 Thus we can assume $\dim
(K)=1$.  If $K$ is not an analytic submanifold, its singular set is a
nonempty and $\mcal G$-invariant; being $0$-dimensional, it is finite hence
contained in $\Z {\mcal G}$.
\end{description}

(A1$'$) and [(A2$'$) imply:

\begin{description}

\item[(C1$'$)] {\em $K$ is a compact connected Riemann surface  of genus $1$,
holomorphically embedded in $M$.} 
\end{description}

As every $Y\in \mcal G$ tracks $X$, $K$ is $Y$-invariant (Proposition
\ref{th:ideal}. Therefore $Y|K$ is a holomorphic
vector field on $K$, and $\Z Y \cap K\ne\varnothing$  (Theorem
\ref{th:MAIN}).  Since a nontrivial holomorphic vector field on a
compact Riemann surface has no zeros,  $K\subset \Z Y$
for all $Y\in \mcal G$. This completes the proof
of Theorem \ref{th:liealg} for complex manifolds.

Now assume: {\em $M$ is real and $\mcal G$ is supersolvable.}

Let $\dim\, G=d, \, 1\le d <\infty$.  Finite dimensionality and the
assumption that $\mcal G$ tracks $X$ implies $X$ spans an ideal $\mcal
H_1$

The conclusion of the theorem is trivial if $d=1$.  If $d=2$ then
$\mcal G$ has a basis $\{X, Y\}$ and Theorem \ref{th:MAIN} implies $\Z
Y\cap K\ne\varnothing$, whence $\Z{\mcal G}\cap K\ne\varnothing$.
 
Proceeding inductively we assume $d >2$ and that the conclusion holds
for smaller values of $d$.  Supersolvability implies there is a chain
of ideals
\[
 \mcal H_1\subset \dots \subset H_d=\mcal G
\]
such that $\dim,\mcal H_k = k$.  Note that
\[
 \Z{\mcal H_1} = K.
\]
The inductive hypothesis implies
\[ L:=\Z {\mcal H_{d-1}}\cap K\ne\varnothing,
\]
and $L$ is $\mcal G$-invariant because the zero sets of the ideals
$\mcal H_{d-1}$ and $\mcal H_1$ are $\mcal G$-invariant.

If $\dim\, L=0$ it is a nonempty finite $\mcal G$ invariant set, hence
contained in $\mcal G \cap K$.  Suppose $\dim\,L=1$.  If $L=K$ there
is nothing more to prove, and if $L\ne K$ its frontier in $K$ is a
nonempty finite $\mcal G$-invariant set.  The proof of Theorem
\ref{th:MAIN} is complete. \qed

\subsection{Proof  of Theorem \ref{th:liegroup}}
The effective analytic action of $G$ on $M$ induces an isomorphism
from the Lie algebra of $G$ onto a Lie algebra $\mcal G\subset \V^\om
(M)$.  Let $X\in \mcal G$ span the Lie algebra of a $1$-dimensional
ideal.  Because $\chi (M)\ne 0$, the set $K:=\Z X$ is an essential
$X$-block by the Poincar\'e-Hopf Theorem \ref{th:PH}.  Theorem
\ref{th:liealg} shows that $\Z{\mcal G}\cap K\ne\varnothing$.
Connectedness of $G$ implies $\Z{\mcal G}=\Fix \alpha$, implying the
conclusion. \qed


\end{document}